\theoremstyle{plain}
\newtheorem*{theorem*}{Theorem}
\newtheorem{theorem}{Theorem}[section]
\crefname{theorem}{Theorem}{Theorems}
\Crefname{theorem}{Theorem}{Theorems}
\newtheorem*{lemma*}{Lemma}
\newtheorem{lemma}[theorem]{Lemma}
\crefname{lemma}{Lemma}{Lemmas}
\Crefname{lemma}{Lemma}{Lemmas}
\newtheorem*{claim*}{Claim}
\newtheorem{claim}[theorem]{Claim}
\crefname{claim}{Claim}{Claims}
\Crefname{claim}{Claim}{Claims}
\newtheorem*{innerclaim*}{Claim}
\crefname{innerclaim}{Claim}{Claims}
\Crefname{innerclaim}{Claim}{Claims}
\newtheorem{proposition}[theorem]{Proposition}
\crefname{proposition}{Proposition}{Propositions}
\Crefname{proposition}{Proposition}{Propositions}
\newtheorem{remark}[theorem]{Remark}
\crefname{remark}{Remark}{Remarks}
\Crefname{remark}{Remark}{Remarks}
\newtheorem{corollary}[theorem]{Corollary}
\crefname{corollary}{Corollary}{Corollaries}
\Crefname{corollary}{Corollary}{Corollaries}
\newtheorem{conjecture}[theorem]{Conjecture}
\crefname{conjecture}{Conjecture}{Conjectures}
\Crefname{conjecture}{Conjecture}{Conjectures}
\crefname{question}{Question}{Questions}
\Crefname{question}{Question}{Questions}
\crefname{observation}{Observation}{Observations}
\Crefname{observation}{Observation}{Observations}
\crefname{example}{Example}{Examples}
\Crefname{example}{Example}{Examples}
\crefname{remark}{Remark}{Remarks}
\Crefname{remark}{Remark}{Remarks}
\theoremstyle{definition}
\crefname{problem}{Problem}{Problems}
\Crefname{problem}{Problem}{Problems}
\newtheorem{definition}[theorem]{Definition}
\crefname{definition}{Definition}{Definitions}
\Crefname{definition}{Definition}{Definitions}
\newcommand{\cF}{\mathcal{F}}
\newcommand{\cS}{\mathcal{S}}
\newcommand{\cP}{\mathcal{P}}
\newcommand{\cQ}{\mathcal{Q}}
\newcommand{\cG}{\mathcal{G}}
\newcommand{\cJ}{\mathcal{J}}
\newcommand{\eps}{\varepsilon}
\newcommand{\floor}[1]{\lfloor #1 \rfloor}
\begin{document}
\title{On a problem of Brown, Erd\H{o}s and S\'{o}s}
\author{
	Shoham Letzter\thanks{
		Department of Mathematics, 
		University College London, 
		Gower Street, London WC1E~6BT, UK. \\
		Email: \texttt{\{s.letzter|a.sgueglia\}}@\texttt{ucl.ac.uk}.
		This research is supported by the Royal Society.
	}
	\and
	Amedeo Sgueglia\footnotemark[1]
}

\maketitle

\begin{abstract}
	\setlength{\parskip}{\medskipamount}
	\setlength{\parindent}{0pt}
	\noindent
	Let $f^{(r)}(n;s,k)$ be the maximum number of edges in an $n$-vertex $r$-uniform hypergraph not containing a subhypergraph with $k$ edges on at most $s$ vertices.
	Recently, Delcourt and Postle, building on work of Glock, Joos, Kim, K\"{u}hn, Lichev and Pikhurko, proved that the limit $\lim_{n \to \infty} n^{-2} f^{(3)}(n;k+2,k)$ exists for all $k \ge 2$, solving an old problem of Brown, Erd\H{o}s and S\'{o}s (1973).
	Meanwhile, Shangguan and Tamo asked the more general question of determining if the limit $\lim_{n \to \infty} n^{-t} f^{(r)}(n;k(r-t)+t,k)$ exists for all $r>t\ge 2$ and $k \ge 2$.
	
	Here we make progress on their question. For every even $k$, we determine the value of the limit when $r$ is sufficiently large with respect to $k$ and $t$.
	Moreover, we show that the limit exists for $k \in \{5,7\}$ and all $r > t \ge 2$.

    2020 Mathematics Subject Classification: 05C65, 05C35. 
\end{abstract}

\section{Introduction}
\label{sec:intro}

	An $(s,k)$-configuration in an $r$-uniform hypergraph (henceforth \emph{$r$-graph}) is a collection of $k$ edges spanning at most $s$ vertices.
    Brown, Erd\H{o}s and S\'{o}s \cite{BES} started the investigation of the function $f^{(r)}(n;s,k)$, defined as the maximum number of edges in an $n$-vertex $r$-graph not containing an $(s,k)$-configuration.   
    In particular, they showed that $f^{(r)}(n; s, k) = \Omega(n^{(rk-s)/(k-1)})$ for all $s>r \ge 2$ and $k \ge 2$.
    Suppose now that the exponent $t:={(rk-s)/(k-1)}$ is an integer, so $s=k(r-t)+t$.
    Observe that $s$ is the number of vertices spanned by a $k$-edge $r$-graph where the edges can be ordered so that all but the first edge share exactly $t$ vertices with the previous edges.
    In particular, any set of vertices of size $t$ which is contained in $k$ distinct edges creates a $(k(r-t)+t, k)$-configuration.
    Therefore $f^{(r)}(n; k(r-t)+t, k) =O(n^t)$ and, with the above result of Brown, Erd\H{o}s and S\'os, we have
    \[
    	f^{(r)}(n; k(r-t)+t, k) = \Theta(n^t)\, .
    \]    
    A major open problem is the following conjecture, which was proposed by Shangguan and Tamo \cite{shanguann_tamo} and generalises an old conjecture of Brown, Erd\H{o}s and S\'{o}s \cite{BES} (corresponding to $r=3$ and $t=2$).

    \begin{conjecture}
    \label{conj:BES}
        For any positive integers $r, k, t$, the limit
        \[
       		\pi(r,t,k):= \lim\limits_{n \to \infty} n^{-t} f^{(r)}(n;k(r-t)+t,k)
       	\]
       	exists.
    \end{conjecture}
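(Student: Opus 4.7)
The plan is to establish the limit $\pi(r,t,k)$ via a matching upper and lower bound, in the spirit of the $r=3$, $t=2$ breakthroughs of Glock, Joos, Kim, K\"uhn, Lichev, Pikhurko and of Delcourt and Postle. The upper bound is elementary: since any $t$-subset of vertices lying in $\ge k$ edges already spans a $(k(r-t)+t, k)$-configuration (the ``common-core'' type), every $t$-set can lie in at most $k-1$ edges, giving
$f^{(r)}(n; k(r-t)+t, k) \le \frac{k-1}{\binom{r}{t}} \binom{n}{t}$. This controls $\limsup n^{-t} f$ from above, but by itself does not imply convergence --- and in many regimes (e.g.\ the $(6,3)$-problem) the true value is strictly smaller due to ``degenerate'' configurations in which some edges share more than $t$ vertices.

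To upgrade to an actual limit, I would set $\alpha := \limsup_{N \to \infty} N^{-t} f^{(r)}(N; k(r-t)+t, k)$ and aim to show that for every $\eps > 0$ and all sufficiently large $n$, $f^{(r)}(n; k(r-t)+t, k) \ge (\alpha - \eps) n^t$. My strategy has three ingredients. First, pick a large $N$ and an $N$-vertex configuration-free $r$-graph $\cH_0$ attaining density at least $\alpha - \eps/2$. Second, on $n$ vertices, take many random near-disjoint copies of $\cH_0$ so that almost every $t$-set achieves near-extremal degree. Third, apply an iterative absorption argument, with an absorber chosen in advance, to fix the small residual defect while guaranteeing that no forbidden configuration is created. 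This combination would give $\liminf n^{-t} f \ge \alpha - O(\eps)$, hence equality of $\liminf$ and $\limsup$.

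The main obstacle is the cross-block step in the third ingredient: random embeddings of $\cH_0$ can create $(k(r-t)+t, k)$-configurations from edges belonging to several distinct copies, and the variety of such degenerate configurations grows combinatorially with $k$. Ruling them out requires careful control of how copies are glued, together with a bespoke absorber for each configuration type. For even $k$ and $r$ large with respect to $k,t$, most such configurations split naturally into pairs of edges sharing exactly $t$ vertices; this rigid pairing structure can be ruled out by first-moment estimates, which is what I expect to drive the even-$k$, large-$r$ result. For $k \in \{5,7\}$, the finite list of possible configuration shapes on $\le k(r-t)+t$ vertices can be enumerated and handled case by case, with separate absorbers for each. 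The general conjecture for arbitrary odd $k \ge 9$ appears to need a genuinely new ingredient --- perhaps an induction on $k$ or a stability-type classification of near-extremal configurations --- and this is where I anticipate the hardest technical barrier to lie.
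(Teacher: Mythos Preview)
The statement you are addressing is \Cref{conj:BES}, which the paper does \emph{not} prove in full; it remains an open conjecture. The paper establishes only the special cases $k$ even with $r$ large (\Cref{thm:r>>t}), $k=5$ (\Cref{thm:k=5}), and $k=7$ (\Cref{thm:k=7}), together with an upper bound on the $\limsup$ for odd $k$ and large $r$ (\Cref{prop:k_odd}). Your proposal implicitly concedes this too: you say that odd $k \ge 9$ ``appears to need a genuinely new ingredient'', so what you have written is a partial strategy, not a proof of the conjecture. There is therefore no ``paper's own proof'' of the full statement to compare against.

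Restricted to the cases the paper \emph{does} settle, your route is also quite different from what the paper actually does. For even $k$ and large $r$, the paper does not use random embeddings, pairing structure, or first-moment estimates. The lower bound is just the trivial single-edge packing of \Cref{cor:glockJKKLP}, and the matching upper bound is a direct double-counting argument (\Cref{claim:J0_J2}): after a light cleaning step (\Cref{lemma:cleaning}), each $2$-configuration in a $k$-free graph carries many $t$-subsets of its vertex set that are uncovered, which forces $|J_0| \ge (k-2)|J_{\ge 2}|$ and hence $|\cF| \le \binom{r}{t}^{-1}\binom{n}{t} + O(n^{t-1})$. For $k \in \{5,7\}$, the paper does not enumerate configuration shapes and design bespoke absorbers either; instead it runs a density-preserving reduction (\Cref{prop:density}): starting from a near-extremal $k$-free $\cF$, it iteratively deletes $\ell^-$-configurations while checking, case by case, that the ratio $|\cF'|/|J(\cF')|$ does not decrease, and then feeds the resulting $\ell^-$-free subgraph into the black box \Cref{thm:glockJKKLP} from \cite{glockJKKLP_BES_problem}. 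All of the random-copies-plus-absorption machinery you describe is already encapsulated inside that black box; the paper's new work lies entirely in the cleaning and density arguments that manufacture a suitable input for it.
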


    We can assume $k \ge 2$ and $t \in [r-1]$, as otherwise \cref{conj:BES} is trivial. 
    Moreover, when $t=1$, it is easy to establish that the limit exists and that $\pi(r,1,k)=\frac{k-1}{(k-1)(r-1)+1}$, as already observed in \cite{glockJKKLP_BES_problem}.
    Indeed, the extremal constructions are vertex-disjoint unions of loose trees with $k-1$ edges, while the upper bound follows from the fact any collection of $k$ edges, which can be ordered so that all but the first edge share at least one vertex with the previous ones, is a $(k(r-1)+1,k)$-configuration.
    Therefore we can in fact assume $k \ge 2$ and $t \in [2,r-1]$, and we will do so in the rest of the paper.
    Recently, significant progress has been made towards \cref{conj:BES} and we now summarise the main developments.
    
    We start by discussing the results concerning the original conjecture of Brown, Erd\H{o}s and S\'{o}s, that is \cref{conj:BES} for $r=3$ and $t=2$, i.e.\ the existence of $\pi(3,2,k)$.
    Brown, Erd\H{o}s and S\'{o}s studied the case $k=2$ and  showed \cite{BES} that the limit is $\pi(3,2,2)=1/6$.
    More than $40$ years later, Glock \cite{glock_k=3} proved the conjecture for $k=3$ and determined that $\pi(3,2,3)=1/5$.
    Very recently, Glock, Joos, Kim, K\"{u}hn, Lichev and Pikhurko \cite{glockJKKLP_BES_problem} proved the conjecture for $k=4$ and determined that $\pi(3,2,4)=7/36$.
    In the concluding remarks, they also claim that their methods can be adapted to show that $\pi(3,2,k)=1/5$ for $k \in \{5,7\}$. 
    Finally, Delcourt and Postle \cite{delcourt_BES_problem} proved the Brown--Erd\H{o}s--S\'{o}s conjecture in full, i.e.\ they showed that $\pi(3,2,k)$ exists for all $k \ge 2$, although their method does not provide an explicit value for the limit.  

    Shanguann and Tamo \cite{shanguann_tamo} adapted the methods in \cite{glock_k=3} to any uniformity and showed that $\pi(r,2,3)=1/(r^2-r-1)$, and Shanguann \cite{shangguan2} adapted \cite{delcourt_BES_problem} to any uniformity and showed that $\pi(r,2,k)$ exists (but provided no explicit value).
    
    Concerning the general conjecture, the case $k=2$ follows from the work of R\"odl \cite{rodl} on the existence of asymptotic Steiner systems, and we have $\pi(r,t,2)=\frac{1}{t!}\binom{r}{t}^{-1}$.
    Glock, Joos, Kim, K\"{u}hn, Lichev and Pikhurko \cite{glockJKKLP_BES_problem} settled the cases $k=3$ and $k=4$ by showing that $\pi(r,t,3)=\frac{2}{t!} \left(2\binom{r}{t}-1\right)^{-1}$ for every $r \ge 2$ and $\pi(r,t,4)=\frac{1}{t!}\binom{r}{t}^{-1}$ for every $r \ge 4$ (note that the case $r=3$ (and $t = 2$) is covered by one of the results mentioned above and does not follow the same pattern).

    Our first result provides the exact value of the limit when $k$ is even and $r$ is sufficiently large in terms of $k$ and $t$.
    \begin{theorem}
    \label{thm:r>>t}
        Let $k$ be an even positive integer and $t \ge 2$ an integer.
        Then, for every integer $r$ satisfying $r \ge t+(k^3 \cdot t!)^{1/t}$, we have that $\lim\limits_{n \to \infty} n^{-t} f^{(r)}(n;k(r-t)+t,k)= \frac{1}{t!} \binom{r}{t}^{-1}$. 
    \end{theorem}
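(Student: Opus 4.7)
The plan splits the proof into matching lower and upper bounds for $\pi(r,t,k)$. Both bounds rest on the elementary structural fact that if an $r$-graph $H$ contains an edge $e_0$ together with $k-1$ further distinct edges each meeting $e_0$ in at least $t$ vertices, then the union of these $k$ edges has size at most $r + (k-1)(r-t) = k(r-t)+t$, producing a $(k(r-t)+t,k)$-configuration.

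For the upper bound, the plan is a supersaturation and cleanup argument. Suppose $H$ is configuration-free with $e(H) = (c+\eps) n^t$ where $c = 1/(t!\binom{r}{t})$. The structural fact above forces every edge of $H$ to have at most $k-2$ distinct $t$-partners (other edges sharing at least $t$ vertices), which, by double counting, gives
\[
	\sum_T \binom{d(T)}{2} \;\le\; \binom{r-1}{t} \cdot \frac{(k-2)\, e(H)}{2},
\]
where the sum is over $t$-subsets $T$ with codegree $d(T)$. Convexity applied to the identity $\sum_T d(T) = \binom{r}{t} e(H)$ forces $\sum_T \binom{d(T)}{2} \ge \Omega(\eps n^t)$. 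A single round of this comparison yields only a constant-order threshold on $\eps$; the sharp bound with $\eps \to 0$ requires iterating, passing to sub-$r$-graphs with a more uniform codegree profile, in the spirit of the analysis carried out by Glock--Joos--Kim--K\"uhn--Lichev--Pikhurko in the case $k=4$.

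For the lower bound I plan to use a conflict-free hypergraph matching theorem of the kind developed by Delcourt--Postle and by Glock--Joos--Kim--K\"uhn--Lichev--Pikhurko. Let $\mathcal{H}$ be the auxiliary hypergraph on vertex set $\binom{[n]}{t}$ whose hyperedges are the sets $\binom{R}{t}$ for $R \in \binom{[n]}{r}$; a matching in $\mathcal{H}$ is exactly a partial Steiner system $S(t,r,n)$. As conflicts I would take, for each $(k(r-t)+t,k)$-configuration of $r$-subsets of $[n]$, the corresponding $k$-tuple of hyperedges of $\mathcal{H}$. A conflict-free near-perfect matching in $\mathcal{H}$ then yields an $n$-vertex $r$-graph with $(1-o(1))\binom{n}{t}/\binom{r}{t}$ edges that avoids every $(k(r-t)+t,k)$-configuration, matching the upper bound.

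The main obstacle is verifying the sparsity and codegree hypotheses of the matching theorem for this conflict system. For each $1 \le j \le k-1$ one must bound the number of configurations extending a fixed set of $j$ $r$-subsets by essentially $n^{(k-j)(r-t)}$ divided by the required polylogarithmic slack. The assumption $r \ge t+(k^3\cdot t!)^{1/t}$, equivalent to $\binom{r}{t} \gtrsim k^3$, is calibrated to provide this room. The parity of $k$ enters here: even $k$ permits a matching-style pairing of the $k$ edges of a configuration into $k/2$ overlapping pairs, which is what lets one control the dominating extension counts at the required threshold; the analogous bound fails for odd $k$ and is responsible for the strict excess over $1/(t!\binom{r}{t})$ already visible at $k=3$.
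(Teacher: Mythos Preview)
Your proposal has the roles of the two bounds reversed, and this leaves a genuine gap in the upper bound.

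The lower bound $\liminf_{n\to\infty} n^{-t} f^{(r)}(n;k(r-t)+t,k) \ge \frac{1}{t!}\binom{r}{t}^{-1}$ holds for \emph{every} $k\ge 2$ and every $r>t\ge 2$, with no parity hypothesis and no lower bound on $r$: a near-perfect partial $(n,r,t)$-Steiner system is automatically $k$-free, and that is all one needs (this is Corollary~3.2 in the paper, quoted from \cite{glockJKKLP_BES_problem}). Your conflict-free matching plan would reprove this, but the parity of $k$ and the assumption $r\ge t+(k^3\,t!)^{1/t}$ are irrelevant there. Both hypotheses are needed in the \emph{upper} bound, and you do not use either of them in that part of your sketch; since for odd $k$ the limsup is strictly above $\frac{1}{t!}\binom{r}{t}^{-1}$ (already at $k=3$), any upper-bound argument that never invokes evenness cannot be correct.

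Concretely, after cleaning to $2^-$-freeness your convexity step yields only $e(H)\le\bigl(1+(k-2)/\binom{r}{t}\bigr)\binom{n}{t}/\binom{r}{t}$, a fixed constant above the target, and you offer no mechanism for the promised ``iteration'' to shrink the error to $o(1)$. The paper's argument is different and does not iterate. Writing $J_0$ and $J_{\ge 2}$ for the $t$-sets covered $0$ times and at least $2$ times respectively, one has
\[
\binom{r}{t}\,e(H)=\sum_{i\ge 1} i\,|J_i|\le |J_1|+(k-1)|J_{\ge 2}|=\binom{n}{t}-|J_0|+(k-2)|J_{\ge 2}|,
\]
so the whole problem reduces to showing $|J_0|\ge (k-2)|J_{\ge 2}|$. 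The paper obtains this by double-counting pairs $(S,T)$ where $S=\{f_1,f_2\}$ is a $2$-configuration and $T\in J_0$ is a $t$-subset of $V(S)$: on one side, each $T$ lies in at most $(k-2)^2$ such $S$ (this bound uses $k$ even, via $k/2$ edge-disjoint $2$-configurations through a common $t$-set forming a $k$-configuration); on the other side, each $S$ contributes at least $\binom{2r-t}{t}-\bigl(2\binom{r}{t}-1\bigr)-(k-3)$ sets $T\in J_0$, since almost every $t$-subset of $V(S)$ meeting both $f_1\setminus f_2$ and $f_2\setminus f_1$ is uncovered. The largeness of $r$ makes this last count at least $(k-2)^3$, and the desired inequality follows. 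This uncovered-versus-multiply-covered comparison is the missing idea in your upper bound.
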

    We remark that, as mentioned above, this was already known for $k=2$ \cite{rodl} and $k=4$ \cite{glockJKKLP_BES_problem} (and all $r$).
    Moreover, it is interesting to observe that the behaviour for odd $k$ is potentially different.
    For example, from \cite{glockJKKLP_BES_problem}, it holds that $\pi(r,t,3) = \frac{2}{t!} \left(2\binom{r}{t}-1\right)^{-1}$. Therefore, we now focus on the case of $k$ being odd.
	
	Firstly, we completely settle \cref{conj:BES} for $k=5$.
    
    \begin{theorem}
    \label{thm:k=5}
        Let $r$, $t$ be integers satisfying $r > t \ge 2$.
        Then the limit $\lim\limits_{n \to \infty} n^{-t} f^{(r)}(n;5(r-t)+t,5)$ exists.
    \end{theorem}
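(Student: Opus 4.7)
The plan is to extend the approach outlined by Glock, Joos, Kim, K\"uhn, Lichev and Pikhurko \cite{glockJKKLP_BES_problem}, who claim that their methods handle the cases $k \in \{5,7\}$ when $(r,t) = (3,2)$, to arbitrary $r > t \geq 2$. We establish only the existence of $\pi(r,t,5)$, not its value. As is standard in this area, this reduces (via the trivial $O(n^t)$ upper bound) to showing a near-monotonicity inequality: for every $\eps > 0$ and every sufficiently large $m$, there exists $n_0 = n_0(m)$ such that
\begin{equation*}
\frac{f^{(r)}(n; 5(r-t)+t, 5)}{n^t} \ \geq\ \frac{f^{(r)}(m; 5(r-t)+t, 5)}{m^t} - \eps \quad \text{for all } n \geq n_0 .
\end{equation*}
Combined with the upper bound, this yields convergence of $n^{-t} f^{(r)}(n; 5(r-t)+t,5)$ as $n \to \infty$.

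First I would enumerate the finitely many possible \emph{shapes} a $(5(r-t)+t, 5)$-configuration can take, each shape being an abstract intersection pattern of $5$ edges spanning at most $5(r-t)+t$ vertices. Ordering the edges and letting $a_i$ denote the number of vertices of the $i$-th edge lying in the union of the previous ones, we have $\sum_{i=2}^{5} a_i \geq 4t$ with $a_i \in \{0, 1, \ldots, r\}$, giving a finite list of shapes parametrised only by $r$ and $t$, with the "generic" shape being a loose tree where each $a_i = t$.

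The main construction starts from a near-extremal $m$-vertex $(5(r-t)+t, 5)$-free hypergraph $H_m$, partitions $[n]$ into equal-sized parts $V_1, \ldots, V_m$, and, for each edge $\{i_1, \ldots, i_r\}$ of $H_m$, selects a random partial Steiner-like sub-hypergraph on $V_{i_1} \cup \cdots \cup V_{i_r}$ (for instance via a Pippenger--Spencer nibble or a random greedy process inside each blown-up cell). By tuning the process so that each cell contributes roughly $(n/m)^t$ edges, the total edge count is $(1 + o(1)) \cdot e(H_m) / m^t \cdot n^t$, matching the target density up to the $\eps$ loss. It remains to show that the resulting hypergraph contains only $o(n^t)$ forbidden $(5(r-t)+t, 5)$-configurations beyond those already corresponding to configurations in $H_m$. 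For each shape in our enumeration we estimate the expected number of such new configurations, using that (i) "vertex coincidences" across parts are rare because the parts are large, and (ii) $t$-sets with high edge-multiplicity in $H_m$ are rare because $H_m$ itself is $(5(r-t)+t, 5)$-free and near-extremal. A union bound and Markov then let us remove all bad configurations at the cost of $o(n^t)$ edges.

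The main obstacle is controlling the counts of \emph{degenerate} configurations, namely those in which several edges share more than $t$ vertices or several $t$-sets of multiplicity $\geq 2$ interact non-trivially. Unlike for even $k$ (where near-extremal hypergraphs are approximately partial Steiner systems and degenerate shapes are negligible), for odd $k = 5$ the near-extremal structures can contain "cherries" of multiplicity $2$, and combinations of cherries combine into forbidden configurations in several distinct ways. This is precisely the case analysis that makes the $k \in \{5,7\}$ results of \cite{glockJKKLP_BES_problem} more delicate than those for $k \in \{2, 4\}$, and our task is to verify that the enumeration survives the additional parameter $t$; since the set of relevant overlap patterns is still finite and depends only on $r$ and $t$, we expect no conceptually new difficulty, only more involved bookkeeping. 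Once all shapes are handled the nibble argument completes the proof.
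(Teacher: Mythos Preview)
Your plan takes a different route from the paper's and, as stated, has a real gap at the step you yourself flag as ``the main obstacle.''

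The paper does not run a blow-up/nibble from an arbitrary near-extremal seed. It invokes the packing theorem of Glock, Joos, Kim, K\"uhn, Lichev and Pikhurko (Theorem~\ref{thm:glockJKKLP}) as a black box, via the density criterion Proposition~\ref{prop:density}: one must pass from a dense $5$-free $\cF$ to a subgraph $\cF_2$ that is $\ell^-$-free for every $\ell\in\{2,3,4\}$ while keeping $|\cF_2|/|J(\cF_2)| \ge |\cF_1|/|J(\cF_1)|$, where $J(\cdot)$ is the $t$-graph of all $t$-sets lying inside some $\ell$-configuration with $\ell\le 2$. Lemma~\ref{lemma:cleaning} disposes of $2^-$- and $4^-$-configurations at a cost of $O(n^{t-1})$ edges and additionally arranges that every $2$-configuration is edge-disjoint from every $3^-$-configuration. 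The only substantive step for $k=5$ is then Claim~\ref{claim:k=5_claim3}: in the cleaned graph, deleting a $3^-$-configuration $S$ removes $|J(S)|=3\binom{r}{t}$ sets from $J$, and none of these $t$-sets survives in $J(\cG\setminus S)$ (this uses $4^-$-freeness, $5$-freeness, and the disjointness just arranged). Iterating gives the required $\cF_2$, and Proposition~\ref{prop:density} finishes.

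Your direct construction amounts to reproving Theorem~\ref{thm:glockJKKLP} on an \emph{uncleaned} seed. The hypotheses of that theorem --- $\ell^-$-freeness for all $\ell\in[2,k-1]$ together with non-edge girth $>k/2$ for the supporting $t$-graph --- are not cosmetic. A $5$-free $H_m$ can carry $3^-$-configurations (three edges on $3(r-t)+t-1$ vertices, with no $t$-set of multiplicity $\ge 2$ required), and neither your item (i) on vertex coincidences nor your item (ii) on $t$-set multiplicities bounds what they generate after blowing up: a blown-up $3^-$-configuration together with any two further edges, each meeting its vertex set in $t$ vertices, is already a $5$-configuration, and controlling how many such extending pairs exist is exactly where the $\ell^-$-freeness and girth hypotheses are used in \cite{glockJKKLP_BES_problem}. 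Declaring this ``only more involved bookkeeping'' skips the actual content: one must show that $3^-$-configurations can be stripped out without losing density in the relevant sense, and that is precisely Claim~\ref{claim:k=5_claim3}, not a counting exercise.
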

    
	Finally, we settle \cref{conj:BES} for $k=7$. (Our proof does not work when $r = 3$ and $t = 2$, but this case has been resolved for all $k$ in \cite{delcourt_BES_problem}.)

    \begin{theorem}
    \label{thm:k=7}
        Let $r, t$ be integers satisfying $r > t \ge 2$ and $(r,t) \neq (3,2)$. Then the limit $\lim\limits_{n \to \infty} n^{-t} f^{(r)}(n;7(r-t)+t,7)$ exists.
    \end{theorem}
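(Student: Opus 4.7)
The plan is to adapt the framework developed by Glock, Joos, Kim, K\"uhn, Lichev, and Pikhurko~\cite{glockJKKLP_BES_problem}, who established the existence of $\pi(r, t, k)$ for $k \in \{3, 4\}$ and indicated in their concluding remarks that the method extends to $k \in \{5, 7\}$ when $(r, t) = (3, 2)$. Since \cref{thm:k=5} already establishes the same conclusion for $k = 5$ using (presumably) the same machinery, we anticipate that the proof of \cref{thm:k=7} will closely parallel that of \cref{thm:k=5}, with additional structural case analysis to accommodate the two extra edges.

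The first step is structural: enumerate all isomorphism types of $(7(r-t)+t, 7)$-configurations in an $r$-graph. Recall that such a configuration consists of $7$ edges that can be ordered $e_1, \ldots, e_7$ so that $|e_i \setminus (e_1 \cup \cdots \cup e_{i-1})| = r - t$ for each $i \ge 2$, i.e.\ each new edge contributes exactly $r - t$ new vertices and shares exactly $t$ with the union of the previous ones. For fixed $r, t$, only finitely many such types arise, and these form the atoms of the subsequent analysis.

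The second step, following \cite{glockJKKLP_BES_problem}, is to associate with each configuration $C$ a \emph{splitting}: a decomposition of $C$ into two (or more) sub-configurations with strictly fewer edges, sharing a common ``connector'' substructure (typically a $t$-set of vertices), such that any $r$-graph containing sufficiently many pairs of sub-configurations glued along their connector must contain a copy of $C$. Because $7$ is odd, the symmetric decomposition $7 = a + a$ is unavailable, so asymmetric splittings such as $7 = 3 + 4$ or $7 = 2 + 5$ are required, and possibly iterated or more refined decompositions. These splittings feed into a supersaturation argument showing that $n^{-t} f^{(r)}(n; 7(r-t) + t, 7)$ is asymptotically non-decreasing in $n$ and therefore converges.

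The main obstacle will be verifying that \emph{every} configuration type admits a usable splitting. Some configurations --- particularly rigid ones involving long chains of maximally overlapping edges --- may resist the natural decompositions, forcing tailored arguments. The exclusion $(r, t) \neq (3, 2)$ strongly suggests that at least one $7$-edge configuration in that regime fails to admit a suitable splitting, which is precisely why this case is deferred to the more abstract method of Delcourt and Postle~\cite{delcourt_BES_problem}. For all other $(r, t)$, the extra ``room'' provided by $r > 3$ or $t > 2$ should let each configuration be split usefully, and the argument then follows the $k = 5$ pattern.
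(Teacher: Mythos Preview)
Your proposal describes a plausible-sounding but fundamentally different strategy from the one the paper actually uses, and as written it is too vague to constitute a proof. You propose to enumerate all isomorphism types of $7$-configurations, find for each a ``splitting'' into smaller pieces glued along a $t$-set, and feed this into a supersaturation argument showing asymptotic monotonicity of $n^{-t}f^{(r)}(n;7(r-t)+t,7)$. None of these steps is carried out, and the paper does not proceed this way at all.

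The paper's proof runs through \cref{prop:density}: given a dense $7$-free $r$-graph $\cF$, one must exhibit $\cF_2\subseteq\cF_1\subseteq\cF$ with $|\cF_1|\ge|\cF|-O(n^{t-1})$, $\cF_2$ being $\ell^-$-free for all $\ell\in[2,6]$, and $|\cF_2|/|J(\cF_2)|\ge|\cF_1|/|J(\cF_1)|$. The cleaning lemma (\cref{lemma:cleaning}) yields $\cF_1$ which is already $2^-$-, $3^-$-, $6^-$-free with the edge-disjointness properties \ref{cleaning:P2}. The real content is then three claims, applied in sequence, each asserting that a certain bad configuration $S$ can be deleted with $|J(\cG)|-|J(\cG\setminus S)|\ge\binom{r}{t}|S|$: first $3$-configurations contained in $4$-configurations (\cref{claim:K=7_claim1}), then $4^-$-configurations (\cref{claim:K=7_claim2}), then $5^-$-configurations (\cref{claim:K=7_claim3}). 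The exclusion $(r,t)\neq(3,2)$ enters only through the elementary inequalities $\binom{3r-2t}{t}-4\ge 3\binom{r}{t}$ and $\binom{2r-t}{t}\ge 2\binom{r}{t}+2$ (\cref{claim:calc-2,claim:calc-3}), which fail precisely when $(r,t)=(3,2)$. Your proposal neither identifies this density-tracking mechanism nor the specific sequence of configurations to remove, and the ``splitting/supersaturation'' framework you sketch is not the engine here.
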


    More about the case of $k$ being odd can be found in the concluding remarks.

    \begin{remark}
    \label{remark:recent_paper}
        Shortly after this paper appeared on arXiv, Glock, Kim, Lichev, Pikhurko and Sun \cite{glockKLPS_recent} determined the value of $\pi(r,t,k)$ for $t=2$, $k \in \{5,6,7\}$ and all $r \ge 3$: they showed that for $k \in \{5,7\}$, it holds that $\pi(r,2,k)=1/(r^2-r-1)$ (observe this is the same value as for $k=3$), while for $k=6$ it holds that $\pi(3,2,6)=61/330$ and $\pi(r,2,6)=1/(r^2-r)$ for $r \ge 4$ (observe this extends \cref{thm:r>>t} when $t=2$ and $k=6$). 
    \end{remark}
    
    \paragraph{Organisation.}
    \Cref{sec:preliminaries} introduces the relevant notation and collects some preliminary results, including our key proposition (\cref{prop:density}) needed for the proofs of \cref{thm:k=5,thm:k=7}, which are proved in \Cref{sec:k=5} and \Cref{sec:k=7}, respectively. 
	\Cref{sec:even_k} provides the proof of \cref{thm:r>>t} and finally \Cref{sec:conclusion} contains some concluding remarks.
 
    \paragraph{Notation.}
            Given an $r$-graph $\cF$, we often think of $\cF$ as the edge set $E(\cF)$. In particular, by $|\cF|$ we mean the number of edges in $\cF$, and by $e \in \cF$ we mean that $e$ is an edge in $\cF$.
            Since the values of $r$ and $t$ will always be clear from the context, we introduce the following terminology.
            A \emph{$k$-configuration} denotes a $(k(r-t)+t,k)$-configuration, while a \emph{$k^-$-configuration} denotes a $(k(r-t)+t-1,k)$-configuration.
            Moreover, we say that a hypergraph is \emph{$k$-free} (resp.\ \emph{$k^-$-free}) if it does not contain any $k$-configuration (resp.\ $k^-$-configuration). 

\section{Preliminaries} \label{sec:preliminaries}
	
	\subsection{Lower bounds}
    In order to build $k$-free $r$-graphs with many edges, the strategy of Glock, Joos, Kim, K\"{u}hn, Lichev and Pikhurko \cite{glockJKKLP_BES_problem} consisted of packing many copies of a carefully chosen $k$-free $r$-graph of constant size, while making sure not to create any $k$-configurations using edges from different copies.
    Before stating their main technical result, we introduce some definitions.

	Recall that the \emph{$t$-shadow} of a hypergraph $\cF$, denoted $\partial_t\cF$, is the $t$-graph on $V(\cF)$ whose edges are the $t$-subsets of edges in $\cF$.
      
	\begin{definition} \label{def:supporting-graph}
		Given an $r$-graph $\cF$ and a $t$-graph $J$, we say that $J$ is a \emph{supporting $t$-graph} of $\cF$ if $V(J)=V(\cF)$ and $J$ contains the $t$-shadow of $\cF$.
		For such $\cF$ and $J$, we define the \emph{non-edge girth} of $(\cF,J)$ to be the smallest $g\ge 1$ for which there exists a $g$-configuration in $\cF$ whose vertex set contains a non-edge of~$J$. 
		Equivalently, it is the largest $g \ge 1$ such that for every $\ell$-configuration $S$ in $\cF$ with $\ell < g$, all $t$-subsets of $V(S)$ are edges of $J$.
		If no such $g$ exists, we set the non-edge girth of $(\cF,J)$ to be infinity.
	\end{definition}
    
	Here is the main technical result in \cite{glockJKKLP_BES_problem}.
    
    \begin{theorem}[Theorem 3.1 in \cite{glockJKKLP_BES_problem}]
    \label{thm:glockJKKLP}
        Fix $k\ge 2$, $r\ge 3$ and $t\in [2,r-1]$. Let $\cF$ be an $r$-graph which is $k$-free and $\ell^-$-free for all $\ell\in [2,k-1]$. Let $J$ be a supporting $t$-graph of $\cF$ such that the non-edge girth of $(\cF,J)$ is greater than $k/2$. Then, 
        \[
           \liminf_{n\to \infty} n^{-t}f^{(r)}(n;k(r-t)+t,k) \ge \frac{|\cF|}{t!\,|J|}\, .
        \]
    \end{theorem}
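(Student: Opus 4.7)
The plan is to build $\cH$ on $[n]$ by placing overlapping copies of $\cF$, using an approximate edge-disjoint packing of copies of $J$ in $K_n^{(t)}$ as a scaffold. Concretely, I would first invoke a hypergraph nibble (Pippenger--Spencer, R\"odl's nibble, or one of its more recent refinements) to produce, for every large $n$, a family $J_1, \dots, J_N$ of pairwise edge-disjoint copies of $J$ inside $K_n^{(t)}$ with $N = (1-o(1))\binom{n}{t}/|J|$. Because $J$ is a supporting $t$-graph of $\cF$, each isomorphism $\phi_i \colon V(J) \to V(J_i)$ sends $\partial_t \cF$ into $E(J_i)$, so I can set $\cF_i := \phi_i(\cF)$ and $\cH := \bigcup_{i=1}^N \cF_i$. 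Edge-disjointness of the $J_i$ propagates to edge-disjointness of the $\cF_i$, which gives
\[
    |\cH| = N \cdot |\cF| = (1 - o(1))\, \frac{|\cF|}{t!\,|J|}\, n^t.
\]

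The heart of the argument is proving that $\cH$ is $k$-free. Suppose for contradiction that $\cH$ contains a $k$-configuration $S$. Since $\cF$ is itself $k$-free, $S$ cannot be contained in a single $\cF_i$, so after relabelling I may write $S = S_1 \sqcup \cdots \sqcup S_m$ with $S_i \subseteq \cF_i$ nonempty, $m \ge 2$, and $k_i := |S_i|$ summing to $k$. The $\ell^-$-freeness hypothesis on $\cF$ (with trivial counting when $k_i = 1$) forces $|V(S_i)| \ge k_i(r-t) + t$ for every $i$, while $S$ being a $k$-configuration gives $|V(S)| \le k(r-t) + t$. Summing the first inequality over $i$ and subtracting the second yields
\[
    \sum_{v \in V(S)} \bigl( |\{i : v \in V(S_i)\}| - 1 \bigr) \ \ge\ (m-1)\, t,
\]
so the vertex sets $V(S_i)$ must overlap substantially.

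Pick $i_0$ minimising $k_{i_0}$, so that $k_{i_0} \le k/m \le k/2$. The non-edge girth hypothesis guarantees that every $t$-subset of $V(S_{i_0})$ is an edge of $J_{i_0}$. Using the overlap inequality I would then locate a $t$-subset $T \subseteq V(S_{i_0}) \cap V(S_j)$ for some $j \ne i_0$, together with a sub-configuration of $S_j$ of size at most $k/2$ whose vertex set still contains $T$; applying the non-edge girth hypothesis to that sub-configuration gives $T \in E(J_j)$ as well. Having $T \in E(J_{i_0}) \cap E(J_j)$ with $i_0 \ne j$ contradicts the edge-disjointness of the $J$-packing, completing the proof.

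The main obstacle is precisely this final sub-step: producing a $t$-subset $T$ in the overlap of two different copies that lies simultaneously in $E(J_{i_0})$ and $E(J_j)$. The ``small'' side $S_{i_0}$ is handled essentially for free by the non-edge girth hypothesis, but the ``large'' side $S_j$ may have more than $k/2$ edges, so one needs a judicious choice of $T$ inside the overlap — or a recursive descent to a sub-configuration of $S_j$ of size at most $k/2$ whose vertex set still contains $T$ — to bring the non-edge girth hypothesis to bear on both sides at once. This is exactly what the non-edge girth condition (rather than merely $J \supseteq \partial_t \cF$) is tailored for, and navigating the combinatorics of this extraction is the crux of the theorem.
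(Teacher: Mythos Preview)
The paper does not prove this statement; it is quoted from \cite{glockJKKLP_BES_problem} and used as a black box. So there is no proof in the present paper to compare against, and I assess your sketch on its own merits.

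Your outline has the right architecture --- pack copies of $J$ in $K_n^{(t)}$ and plant a copy of $\cF$ on each --- but a plain edge-disjoint nibble packing is \emph{not} sufficient, and the step you flag as the ``main obstacle'' is not merely delicate: it is false in general. Take $r=3$, $t=2$, $k=4$, let $\cF$ be a single edge and $J=\partial_2\cF$ a triangle (so the hypotheses hold with infinite non-edge girth). The four triples $\{a,b,c\}$, $\{a,d,e\}$, $\{b,d,f\}$, $\{c,e,f\}$ are pairwise edge-disjoint as $2$-graphs, so nothing prevents a nibble from selecting all four triangles; yet the corresponding $3$-edges form a $4$-configuration on six vertices (a Pasch configuration). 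Here $m=4$, every $k_i=1$, your overlap inequality is an equality, and every pairwise intersection $V(S_i)\cap V(S_j)$ has size $1<t$, so no $t$-set lies in two copies of $J$ and the contradiction you are reaching for never appears.

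What the proof in \cite{glockJKKLP_BES_problem} actually does is use a \emph{conflict-free} (high-girth) almost-perfect packing: one lists, in advance, the bounded-size ``bad'' families of $J$-copies whose union would produce a $k$-configuration in $\cH$, and then applies conflict-free hypergraph matching machinery to obtain a packing of $(1-o(1))\binom{n}{t}/|J|$ copies of $J$ avoiding all such families. The $\ell^-$-freeness of $\cF$ and the non-edge girth condition enter not to derive a post-hoc contradiction, but to certify that the conflict system is sparse enough for that machinery to apply.
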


    In particular, by choosing $\cF$ to be a single $r$-uniform edge and $J=\binom{V(\cF)}{t}$, the hypotheses of \cref{thm:glockJKKLP} hold and we get the following corollary.

    \begin{corollary}[Corollary 3.2 in \cite{glockJKKLP_BES_problem}]
    \label{cor:glockJKKLP}
        Fix $k\ge 2$, $r\ge 3$ and $t\in [2,r-1]$. 
        Then,
        \[
           \liminf_{n\to \infty} n^{-t}f^{(r)}(n;k(r-t)+t,k) \ge \frac{1}{t!\,\binom{r}{t}}\, .
        \]        
    \end{corollary}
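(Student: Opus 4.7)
The plan is to apply \cref{thm:glockJKKLP} to the simplest possible pair $(\cF, J)$: take $\cF$ to consist of a single $r$-uniform edge $e$ (so $V(\cF) = e$ has $r$ vertices and $|\cF| = 1$), and let $J = \binom{V(\cF)}{t}$ be the complete $t$-graph on these $r$ vertices (so $|J| = \binom{r}{t}$). Plugging these numbers into the conclusion of \cref{thm:glockJKKLP} yields exactly $\frac{|\cF|}{t!\,|J|} = \frac{1}{t!\,\binom{r}{t}}$, which is the desired lower bound.

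The only work is to verify the three hypotheses of \cref{thm:glockJKKLP}. First, a single-edge $r$-graph contains no $\ell$-configuration for any $\ell \ge 2$, since every configuration requires at least two distinct edges; so $\cF$ is trivially $k$-free and $\ell^-$-free for all $\ell \in [2,k-1]$. Second, $J$ is a supporting $t$-graph of $\cF$: by construction $V(J) = V(\cF)$, and since $J$ is the complete $t$-graph on $V(\cF)$ it certainly contains $\partial_t \cF$. Third, the non-edge girth of $(\cF, J)$ is infinite, because $J$ has no non-edges at all, and hence in particular no $g$-configuration in $\cF$ can have a non-edge of $J$ inside its vertex set; so the non-edge girth exceeds $k/2$ vacuously.

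With all hypotheses verified, \cref{thm:glockJKKLP} directly yields the claimed bound. There is no real obstacle here: the corollary is essentially a sanity check showing that the general packing framework of Glock, Joos, Kim, K\"uhn, Lichev and Pikhurko already recovers the natural baseline construction (a near-perfect $r$-uniform matching-like packing suggested by dividing the host vertex set into $t$-sets covered by at most one edge), without exploiting any additional structure in $\cF$.
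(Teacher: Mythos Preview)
Your proof is correct and matches the paper's own argument exactly: the paper also takes $\cF$ to be a single $r$-uniform edge and $J=\binom{V(\cF)}{t}$, notes that the hypotheses of \cref{thm:glockJKKLP} are satisfied, and reads off the bound.
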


	\subsection{Density argument}
	The approach of Delcourt and Postle \cite{delcourt_BES_problem}, while proving \cref{conj:BES} for $r=3$, $t=2$ and any $k \ge 2$, relies on the following reduction: they show that in any sufficiently dense $k$-free $3$-graph, it is possible to find a subgraph with almost the same density which is additionally $\ell^-$-free for every $\ell \in [2,k-1]$, i.e.\ having $\ell^-$-configurations is `inefficient' for the extremal $k$-free graph.
	Here we provide another density-type argument, which we will use in the proof of \cref{thm:k=5,thm:k=7}.
	Before stating the result, we introduce some notation.
	Given an $r$-graph $\cF$, define $J(\cF)$ to be the $t$-graph with $V(\cF)$ as vertex set and where a $t$-subset $T \subseteq V(\cF)$ is an edge of $J(\cF)$ if and only if there exists an $\ell$-configuration for some $\ell \in \left[ \lfloor k/2 \rfloor \right]$ whose vertex set contains $T$.
	Observe that, since every edge is a $1$-configuration, $J(\cF)$ contains the $t$-shadow of $\cF$. 
	Therefore $J(\cF)$ is a supporting $t$-graph of $\cF$ (recall \Cref{def:supporting-graph}).
	Moreover, its non-edge girth is greater than $\lfloor k/2 \rfloor$.

	\begin{proposition}
		\label{prop:density}
		Suppose that for every $\eps > 0$ and large enough $n$, for every $k$-free $n$-vertex $r$-graph $\cF$ with $|\cF| \ge \big(\binom{r}{t}^{-1} + \eps\big) \binom{n}{t}$ there exist subhypergraphs $\cF_2 \subseteq \cF_1 \subseteq \cF$ such that $|\cF_1| \ge |\cF|-O(n^{t-1})$, $\cF_2$ is $\ell^-$-free for every $\ell \in [2,k-1]$, and 
		\begin{equation}
			\label{eq:density_argument}
			\frac{|\cF_2|}{|J(\cF_2)|} \ge \frac{|\cF_1|}{|J(\cF_1)|}\, . 
		\end{equation}
		Then the limit $\lim\limits_{n \to \infty} n^{-t} f^{(r)}(n;k(r-t)+t,k)$ exists.
	\end{proposition}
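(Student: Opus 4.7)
The plan is to show that $\limsup_{n\to\infty} n^{-t} f^{(r)}(n;k(r-t)+t,k) \le \liminf_{n\to\infty} n^{-t} f^{(r)}(n;k(r-t)+t,k)$, which forces the limit to exist. Write $\pi^+$ and $\pi^-$ for these two quantities. By \Cref{cor:glockJKKLP}, $\pi^- \ge \tfrac{1}{t!\binom{r}{t}}$, so if $\pi^+ \le \tfrac{1}{t!\binom{r}{t}}$ then $\pi^- = \pi^+$ and we are done. Assume henceforth that $\pi^+ > \tfrac{1}{t!\binom{r}{t}}$ and fix any $\alpha$ with $\tfrac{1}{t!\binom{r}{t}} < \alpha < \pi^+$. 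Choose $\eps > 0$ small enough that $\alpha\, t! > \binom{r}{t}^{-1} + \eps$. By the definition of $\limsup$, there are infinitely many $n$ admitting a $k$-free $n$-vertex $r$-graph $\cF$ with $|\cF| \ge \alpha n^t \ge \big(\binom{r}{t}^{-1} + \eps\big)\binom{n}{t}$, so for all sufficiently large such $n$ the hypothesis of the proposition applies to $\cF$ and produces $\cF_2 \subseteq \cF_1 \subseteq \cF$ with the stated properties.

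Next, I would verify that $\cF_2$ and $J(\cF_2)$ meet the assumptions of \Cref{thm:glockJKKLP}. The hypergraph $\cF_2$ is $k$-free as a subhypergraph of $\cF$ and, by hypothesis, $\ell^-$-free for every $\ell \in [2,k-1]$. By the definition preceding the proposition, $J(\cF_2)$ is a supporting $t$-graph of $\cF_2$ whose non-edge girth is strictly greater than $\lfloor k/2 \rfloor$. Since the non-edge girth is an integer, it is at least $\lfloor k/2 \rfloor + 1$, which exceeds $k/2$ for both parities of $k$. Hence \Cref{thm:glockJKKLP} applied to $(\cF_2, J(\cF_2))$ gives $\pi^- \ge \tfrac{|\cF_2|}{t!\,|J(\cF_2)|}$. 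Chaining with the density inequality~\eqref{eq:density_argument} and the trivial bound $|J(\cF_1)| \le \binom{n}{t}$ yields
\[
\pi^- \ \ge\ \frac{|\cF_2|}{t!\,|J(\cF_2)|} \ \ge\ \frac{|\cF_1|}{t!\,|J(\cF_1)|} \ \ge\ \frac{|\cF| - O(n^{t-1})}{t!\,\binom{n}{t}}.
\]
Since $t!\,\binom{n}{t} \le n^t$ and $|\cF| \ge \alpha n^t$, letting $n \to \infty$ along the chosen subsequence gives $\pi^- \ge \alpha$, and then letting $\alpha \uparrow \pi^+$ completes the proof.

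The substance of the argument lies in the hypothesis itself: building $\cF_2 \subseteq \cF_1 \subseteq \cF$ satisfying both the $\ell^-$-freeness and the comparison~\eqref{eq:density_argument} is the genuine work, and is what the later sections will supply in the cases $k \in \{5,7\}$. Once this is granted, the deduction above is essentially bookkeeping. The only delicate point is confirming that the non-edge girth of $(\cF_2, J(\cF_2))$ is genuinely large enough for \Cref{thm:glockJKKLP}, which holds uniformly in $k$ precisely because the threshold built into the definition of $J(\cdot)$ is $\lfloor k/2 \rfloor$ (so that the resulting non-edge girth always strictly exceeds $k/2$, for both parities of $k$).
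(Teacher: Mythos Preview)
Your proof is correct and follows essentially the same route as the paper's: both compare $\limsup$ and $\liminf$, use \Cref{cor:glockJKKLP} to handle the case $\pi^+ \le \tfrac{1}{t!\binom{r}{t}}$, pick a near-extremal $k$-free $\cF$ along a subsequence, feed the hypothesis to obtain $\cF_2 \subseteq \cF_1$, invoke \Cref{thm:glockJKKLP} on $(\cF_2, J(\cF_2))$, and chain through~\eqref{eq:density_argument} and $|J(\cF_1)| \le \binom{n}{t}$. The differences are purely cosmetic (you parametrise by $\alpha < \pi^+$ and work with $\alpha n^t$, the paper scales by $t!$ and works with $(\alpha-\eps)\binom{m}{t}$), and your extra sentence checking that the non-edge girth of $(\cF_2,J(\cF_2))$ exceeds $k/2$ for both parities is a welcome clarification.
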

	
	\begin{proof}
		Define $\alpha$ to satisfy $\frac{\alpha}{t!} = \limsup\limits_{n \to \infty} n^{-t} f^{(r)}(n;k(r-t)+t,k)$ and observe that, since \cref{cor:glockJKKLP} gives that $\liminf\limits_{n \to \infty} n^{-t} f^{(r)}(n;k(r-t)+t,k) \ge \frac{1}{t!} \binom{r}{t}^{-1}$, it holds that $\alpha \ge \binom{r}{t}^{-1}$.
		If we have equality, we are done.
		Therefore, we can assume the inequality is strict and thus for small enough $\eps>0$ we have $\alpha > \binom{r}{t}^{-1} + \eps$.
		Given the definition of $\alpha$, for every $n \in \mathbb{N}$, there exist $m \ge n$ and an $m$-vertex $k$-free $r$-graph $\cF$ with $|\cF| \ge (\alpha - \eps) \binom{m}{t}$.
		Owing to the assumptions of the proposition, there exist subhypergraphs $\cF_2 \subseteq \cF_1 \subseteq \cF$ such that $|\cF_1| \ge |\cF|-O(m^{t-1})$, $\cF_2$ is $\ell^-$-free for every $\ell \in [2,k-1]$, and $\frac{|\cF_2|}{|J(\cF_2)|} \ge \frac{|\cF_1|}{|J(\cF_1)|}$.
		As observed above, $J(\cF_2)$ is a supporting $t$-graph of $\cF_2$ and its non-edge girth is greater than $k/2$.
		Therefore, by \cref{thm:glockJKKLP}, we have
		\begin{align*}
			\liminf_{n\to \infty} n^{-t}f^{(r)}(n;k(r-t)+t,k) 
			\ge \frac{|\cF_2|}{t! \cdot |J(\cF_2)|} 
			& \ge \frac{|\cF_1|}{t! \cdot |J(\cF_1)|} \\
			& \ge \frac{(\alpha-\eps)\binom{m}{t}-O(m^{t-1})}{t! \cdot \binom{m}{t}} 
			= \frac{\alpha - \eps}{t!} - O(m^{-1})\, ,
		\end{align*}
		using that $|J(\cF_1)| \le \binom{m}{t}$ for the last inequality.
		Since $\eps$ can be made arbitrarily small and $m$ arbitrarily large, the conclusion easily follows from
		\begin{equation*}
			\liminf_{n\to \infty} n^{-t}f^{(r)}(n;k(r-t)+t,k)  
			\ge \frac{\alpha}{t!} = \limsup_{n\to \infty} n^{-t}f^{(r)}(n;k(r-t)+t,k) \, .
			\qedhere
		\end{equation*}
	\end{proof}

	\begin{remark} \label{rem:equivalent-condition}
		We remark that if $\cF_1$ is an $r$-graph with $|\cF_1| \ge \binom{r}{t}^{-1} |J(\cF_1)|$ and $\cF_2 \subseteq \cF_1$, then the condition \begin{equation}
			\label{eq:density_argument_second_condition}
			|J(\cF_1)| - |J(\cF_2)| \ge \binom{r}{t} \left(|\cF_1|-|\cF_2|\right)
		\end{equation}
		implies Condition \eqref{eq:density_argument}. 
		Indeed, writing $x_1=|J(\cF_1)|$, $y_1=|\cF_1|$, $x_2=|J(\cF_2)|$, $y_2=|\cF_2|$ and $\alpha=\binom{r}{t}$, we have $x_1 \le \alpha y_1$, $x_2 \le x_1$ and $y_2 \le y_1$ by assumption.
		Moreover, by \eqref{eq:density_argument_second_condition}, $x_2 \le x_1 - \alpha(y_1 - y_2) \le \alpha y_2$.
		Therefore, using \eqref{eq:density_argument_second_condition} again, which is equivalent to $\alpha y_1 - x_1 \le \alpha y_2 - x_2$, together with $x_2 \le \alpha y_2$ and $x_2 \le x_1$, we have
		\begin{align*}
			\alpha (x_2y_1-x_1y_2) 
			& = x_2(\alpha y_1 - x_1) +x_1x_2-\alpha x_1 y_2 \\
			& \le x_2(\alpha y_2 -x_2) + x_1(x_2-\alpha y_2) 
			= (x_1-x_2)(x_2-\alpha y_2) \le 0.
		\end{align*}
		This implies $x_1y_2 \ge x_2y_1$, which in turn is equivalent to \eqref{eq:density_argument}.
	\end{remark}

	\subsection{A useful lemma}
	In order to apply the density argument of \cref{prop:density}, for any given $k$-free $n$-vertex $r$-graph $\cF$, we need to find a subhypergraph which is $\ell^-$-free for each $\ell \in [2,k-1]$ and satisfies some additional properties.
	It turns out that, for some values of $\ell$, there is a simple argument which shows that $\cF$ can be made $\ell^-$-free by removing only $O(n^{t-1})$ edges.
	This is established, together with additional properties, by the following lemma.
	
	\begin{lemma}
		\label{lemma:cleaning}
		Let $r,k$ and $t$ be fixed positive integers.
		Let $\cF$ be a $k$-free $n$-vertex $r$-graph.
		Then there exists a subhypergraph $\cF'$ of $\cF$ such that the following holds.
		\begin{enumerate}[label=\upshape(P\arabic*)]
			\item \label{cleaning:P1} 
				$\cF'$ is $\ell^-$-free for every $\ell \in [2,k]$ with $\ell | (k-1)$ or $\ell |k$;
			\item \label{cleaning:P4}
				there is no $3^-$-configuration in $\cF'$ which contains a $2$-configuration;
			\item \label{cleaning:P2} 
				for every positive integers $a$ and $b$ with $a + b =k$, every $a^-$-configuration and every $b$-configuration of $\cF'$ are edge-disjoint;
			\item \label{cleaning:P3} $|\cF'| \ge |\cF| - O(n^{t-1})$.
		\end{enumerate}
	\end{lemma}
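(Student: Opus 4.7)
The plan is to construct $\cF'$ by successively removing edges from $\cF$ to enforce each property, losing only $O(n^{t-1})$ edges at each step. The central consequence of $\cF$ being $k$-free, used throughout, is that every $t$-subset $T \subseteq V(\cF)$ is contained in at most $k-1$ edges: indeed, $k$ edges through $T$ would span at most $|T| + k(r-t) = k(r-t)+t$ vertices and form a $k$-configuration. In particular, $|\cF| = O(n^t)$.

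The main tool for~\ref{cleaning:P1} is a ``stacking'' observation. If $\ell \mid k$, write $k = q\ell$; given $q$ pairwise edge-disjoint $\ell^-$-configurations $C_1, \dots, C_q$ whose vertex sets each contain a common $(t-1)$-set $S$, inclusion--exclusion gives
\[
\left|\bigcup_{i=1}^q V(C_i)\right| \le q\bigl(\ell(r-t)+t-1\bigr) - (q-1)(t-1) = k(r-t) + t - 1 < k(r-t)+t,
\]
so their union is a forbidden $k$-configuration. The case $\ell \mid (k-1)$ is analogous, taking $(k-1)/\ell$ such configurations and adjoining one further edge through $S$. Hence, in a maximal edge-disjoint family $\cC_\ell$ of $\ell^-$-configurations, each $(t-1)$-set is contained in at most $q-1$ members; double-counting via $(t-1)$-subsets (each $C$ has only $O(1)$ of these) yields $|\cC_\ell| = O(n^{t-1})$. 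Removing one edge from each $C \in \cC_\ell$ then destroys all $\ell^-$-configurations, as any such configuration shares an edge with some $C \in \cC_\ell$ by maximality.

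Properties~\ref{cleaning:P4} and~\ref{cleaning:P2} are handled by similar counting. For~\ref{cleaning:P4}, a $3^-$-configuration containing a $2$-configuration is a restricted structure: two of its edges share at least $t+1$ vertices, and the third edge must share enough vertices with them to fit within $3(r-t)+t-1$ total vertices; counting such structures parametrically through each $(t-1)$-set (using that each $t$-set is in at most $k-1$ edges) yields $O(n^{t-1})$ bad edges. For~\ref{cleaning:P2}, if an $a^-$-configuration and a $b$-configuration share an edge, their union has $k-1$ distinct edges on at most $k(r-t)+t-2$ vertices, so a single further edge through a suitable $(t-1)$-set completes a $k$-configuration; the scarcity of these near-$k$-configurations in a $k$-free graph again bounds the bad edges by $O(n^{t-1})$. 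Summing over the three properties yields~\ref{cleaning:P3}.

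The main obstacle is the sharp $O(n^{t-1})$ count -- the naive stacking argument, parametrised by $t$-subsets, only gives $O(n^t)$. It is crucial to exploit the single extra vertex of ``slack'' present in the definition of an $\ell^-$-configuration compared to an $\ell$-configuration, which upgrades the parameter from a $t$-subset to a $(t-1)$-subset and saves a factor of $n$ in the count.
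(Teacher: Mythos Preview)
Your overall strategy---maximal edge-disjoint families of the offending configurations, bounded via stacking through a common $(t-1)$-set---is exactly the paper's approach, and you correctly identify that the extra vertex of slack in an $\ell^-$-configuration is what buys the $O(n^{t-1})$ bound. However, several steps as written do not go through.

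First, removing \emph{one} edge from each $C\in\cC_\ell$ does not destroy all $\ell^-$-configurations: maximality only guarantees that any $\ell^-$-configuration $D$ shares some edge with some $C$, but that shared edge need not be the one you removed. You must remove all edges of each $C$ (still $O(n^{t-1})$ edges since $\ell\le k$). Second, the case $\ell\mid(k-1)$ does not work as you sketch: stacking $(k-1)/\ell$ edge-disjoint $\ell^-$-configurations whose vertex sets contain a common $(t-1)$-set $S$ yields $k-1$ edges on at most $(k-1)(r-t)+t-1$ vertices, i.e.\ a $(k-1)^-$-configuration, not a $k$-configuration; your ``further edge through $S$'' need not exist since $S$ is only in the vertex sets, not in any edge. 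The paper fixes this by \emph{first} making the graph $(k-1)^-$-free (by the same argument, using the $(t-1)$-shadow), after which the stacked $(k-1)^-$-configuration is already a contradiction.

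Third, your treatments of \ref{cleaning:P4} and \ref{cleaning:P2} are not proofs. For \ref{cleaning:P4}, it is not true that two edges of the $3^-$-configuration must share $t+1$ vertices; a $2$-configuration only forces an overlap of at least $t$. For \ref{cleaning:P2}, the union of an $a^-$-configuration and a $b$-configuration sharing one edge has $k-1$ edges on at most $(k-1)(r-t)+t-1$ vertices (not $k(r-t)+t-2$), i.e.\ it is a $(k-1)^-$-configuration; but ``scarcity of near-$k$-configurations'' is not an argument, since you need to bound the bad \emph{edges}, not edge-disjoint families of $(k-1)^-$-configurations. The paper argues differently: let $H_a$ be the set of edges lying in some $a^-$-configuration, take a maximal edge-disjoint family of $b$-configurations each containing an edge of $H_a$, and if some $(t-1)$-set $T$ lies in such an edge for $a+1$ members, pick one $a^-$-configuration through the first edge and combine it with an edge-disjoint $b$-configuration through $T$ to obtain a $k$-configuration. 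For \ref{cleaning:P4} (only needed when $k\equiv 2\pmod 3$), the paper similarly stacks $3^-$-configurations through a $(t-1)$-set in their contained $2$-configuration.
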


	\begin{proof}
		We show that we can get a subhypergraph of $\cF$ which satisfies \ref{cleaning:P1}, \ref{cleaning:P4} and \ref{cleaning:P2} by removing $O(n^{t-1})$ edges, which in turn will imply \ref{cleaning:P3} as well.
		
		Observe that, since $\cF$ is $k$-free, $\cF$ is also $k^-$-free.
		Moreover we show that $\cF$ can be made $(k-1)^-$-free by removing $O(n^{t-1})$ edges.
		Let $\cS$ be a maximal collection of pairwise edge-disjoint $(k-1)^-$-configurations of $\cF$.
		If $|\cS| > \binom{n}{t-1}$, then there exists a set $T \subseteq V(\cF)$ of size $t-1$ which is contained in the $(t-1)$-shadow of two $(k-1)^-$-configurations $S_1$ and $S_2$ in $\cS$.
		Let $e \in S_2$ be an edge such that $T \subseteq e$.
		Then $S_1 \cup \{e\}$ is a $k$-configuration in $\cF$, being a collection of $k$ edges spanning at most $[(k-1)(r-t)+t-1]+(r-|T|)=k(r-t)+t$ vertices, a contradiction to $\cF$ being $k$-free.
		Therefore $|\cS| \le \binom{n}{t-1}$ and, by removing from $\cF$ all edges of each $S \in \cS$, we obtain a subhypergraph $\cF_0 \subseteq \cF$ which is $(k-1)^-$-free and satisfies $|\cF_0| \ge |\cF| - O(n^{t-1})$.

		Let $2 \le \ell < k-1$ with $\ell | (k-1)$ (resp.\ $\ell | k$).        
		Let $j > 1$ be the positive integer such that $\ell \cdot j = k-1$ (resp.\ $\ell \cdot j = k$).
		Let $\cS_{\ell}$ be a maximal collection of pairwise edge-disjoint $\ell^-$-configurations in $\cF_0$.
		If $|\cS_{\ell}| >(j-1) \cdot \binom{n}{t-1}$, then there exists a set $T \subseteq V(\cF)$ of size $t-1$ which is contained in the vertex set of $j$ distinct $\ell^-$-configurations $S_1, \dots, S_j$ in $\cS_{\ell}$.
		Then $S_1 \cup \dots \cup S_j$ is a $(k-1)^-$-configuration of $\cF_0$, being a collection of $\ell \cdot j = k-1$ edges spanning at most $j[\ell(r-t)+t-1]-(j-1)|T|=j\ell(r-t)+t-1=(k-1)(r-t)+t-1$ vertices (resp.\ a $k^-$-configuration).
		This is a contradiction to $\cF_0$ being $(k-1)^-$-free (resp.\ $k^-$-free).
		Therefore $|\cS_{\ell}| = O(n^{t-1})$ for all relevant $\ell$.
		
		If $k \equiv \text{$0$ or $1$} \pmod 3$, then \ref{cleaning:P1} would trivially imply \ref{cleaning:P4}, as $\cF'$ would not contain any $3^-$-configurations, and we set $\cS':=\emptyset$. 
		If that is not the case, namely if $k \equiv 2 \pmod 3$, define $\cS'$ to be a maximal collection of pairwise edge-disjoint $3^-$-configurations containing a $2$-configuration. We claim that $|\cS'| \le \frac{k-2}{3} \cdot \binom{n}{t-1}$. Indeed, otherwise, there is a $(t-1)$-subset $T \subseteq V(\cF)$ and $(k-2)/3+1=(k+1)/3$ many $3^-$-configurations $S_1, \ldots, S_{(k+1)/3} \in \cS'$, where $S_i$ contains a $2$-configuration $S_i'$ satisfying $T \subseteq V(S_i')$. Then $S_1 \cup \ldots \cup S_{(k-2)/3} \cup S_{(k+1)/3}'$ is a $k$-configuration, being a collection of $k$ edges spanning at most $((k-2)/3) \cdot [3(r-t) + t-1] + [2(r-t) + t] - ((k-2)/3) \cdot (t-1) = k(r-t) + t$, a contradiction to $\cF_0$ being $k$-free.
		Therefore $|\cS'|=O(n^{t-1})$. 
		
		Let $a$ and $b$ be positive integers with $a+b=k$.
		Let $H_a$ be the collection of edges contained in $a^-$-configurations of $\cF_0$ and let $\cS_{a,b}$ be a maximal collection of pairwise edge-disjoint $b$-configurations of $\cF_0$ containing an edge of $H_a$.
		If $|\cS_{a,b}| > a \cdot \binom{n}{t-1}$, then there exists a set $T \subseteq V(\cF)$ of size $t-1$ and $a+1$ distinct $b$-configurations $S_1, \dots, S_{a+1}$ in $\cS_{a,b}$ such that there exists $e_i \in S_i \cap H_a$ with $T \subseteq e_i$ for every $i \in [a+1]$.
		By definition of $H_a$, there exists $f_2,\dots,f_a \in \cF_0$ such that $S':=\{e_1,f_2,\dots,f_a\}$ is an $a^-$-configuration and, without loss of generality, we assume that $S_{a+1}$ and $S'$ are edge-disjoint.
		Then $S_{a+1} \cup S'$ is a $k$-configuration of $\cF_2$, being a collection of $b+a=k$ edges spanning at most $[b(r-t)+t]+[a(r-t)+t-1]-|T|=k(r-t)+t$, a contradiction to $\cF_0$ being $k$-free.
		Therefore $|\cS_{a,b}|=O(n^{t-1})$.
		
		Let $\cF'$ be the subhypergraph of $\cF_0$ which is obtained by removing all the edges of each $S \in \cS_{\ell}$ for every $2 \le \ell < k-1$ with $\ell|(k-1)$ or $\ell | k$, all the edges of each $S \in \cS'$, and all the edges of each $S \in \cS_{a,b}$ for every positive integers $a$ and $b$ with $a+b=k$.
		Then $\cF'$ satisfies \ref{cleaning:P1}, \ref{cleaning:P4}, \ref{cleaning:P2} and \ref{cleaning:P3}.
	\end{proof}

	We remark that \cref{prop:density} and \Cref{lemma:cleaning} offer short proofs that \cref{conj:BES} holds for $k=2$ and $k=3$. 
	Indeed, the case $k=2$ is immediate. 
	For $k=3$, given a $3$-free $r$-graph $\cF$, \Cref{lemma:cleaning} gives a subhypergraph $\cF' \subseteq \cF$ which is $2^-$-free and satisfies $|\cF'| \ge |\cF|-O(n^{t-1})$.
	We can then apply \cref{prop:density} with $\cF_1 = \cF_2=\cF'$. 

\section{Proof of \cref{thm:r>>t} (\cref{conj:BES} for $k$ even)} \label{sec:even_k}   
    In this section, we prove \cref{thm:r>>t}, which asserts that $\lim_{n \to \infty}n^{-t}f^{(r)}(n; k(r-t)+t,k)=\frac{1}{t!}\binom{r}{t}^{-1}$, for $k$ even and $r$ sufficiently large in terms of $t$ and $k$.
    We do that by showing that $\liminf_{n\to \infty} n^{-t}f^{(r)}(n;k(r-t)+t,k) \ge \frac{1}{t! \, \binom{r}{t}}$, which follows directly from \cref{cor:glockJKKLP}, and that $\limsup_{n\to \infty} n^{-t}f^{(r)}(n;k(r-t)+t,k) \le \frac{1}{t! \, \binom{r}{t}}$, which follows from \cref{lemma:cleaning} and the inequality and claims provided below. 

	\begin{claim} \label{claim:calc}
		Suppose that $r,t,k$ are integers satisfying $t,k \ge 2$ and $r \ge t + (k^3\cdot t!)^{1/t}$.
		Then
		\begin{equation*}
			\binom{2r-t}{t} - \left[2\binom{r}{t} - 1\right] - (k-3) \ge (k-2)^3.
		\end{equation*}
	\end{claim}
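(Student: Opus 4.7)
The plan is to observe that $(k-2)^3 + (k-3) \le k^3$ for all $k \ge 2$ (equivalent to $6k^2 - 13k + 11 \ge 0$, which has negative discriminant), so it suffices to prove the stronger inequality $\binom{t+2a}{t} - 2\binom{t+a}{t} + 1 \ge k^3$, where $a := r - t$ satisfies $a^t \ge k^3 \cdot t!$.

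For $t = 2$, a direct computation gives $\binom{2+2a}{2} - 2\binom{2+a}{2} + 1 = a^2$, and the hypothesis $a^2 \ge 2k^3 \ge k^3$ settles this case immediately.

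For $t \ge 3$, the key step is to prove the ratio bound $\binom{2r-t}{t} \ge 3\binom{r}{t}$. Combined with the elementary estimate $\binom{r}{t} = \binom{a+t}{t} \ge (a+1)^t/t! \ge a^t/t! \ge k^3$ (since the numerator of $\binom{a+t}{t}$ is a product of $t$ factors each at least $a+1$), this would yield
\[
	\binom{2r-t}{t} - 2\binom{r}{t} + 1 \;\ge\; \binom{r}{t} + 1 \;\ge\; k^3 + 1 \;>\; k^3,
\]
as required. To establish the ratio bound, I would iterate Pascal's identity:
\[
	\binom{2r-t}{t} - \binom{r}{t} \;=\; \sum_{i=0}^{a-1} \binom{r+i}{t-1} \;\ge\; a\,\binom{r}{t-1} \;=\; \frac{at}{a+1}\binom{r}{t},
\]
where the final equality uses $\binom{r}{t-1} = (t/(a+1))\binom{r}{t}$, since $r - t + 1 = a+1$. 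The condition $at/(a+1) \ge 2$ rearranges to $a(t-2) \ge 2$, which holds whenever $t \ge 3$ and $a \ge 2$. To confirm $a \ge 2$, note that $a \ge (k^3\,t!)^{1/t} \ge (8 \cdot t!)^{1/t}$, and the sequence $(8 \cdot t!)^{1/t}$ is nondecreasing in $t$ for $t \ge 3$ (equivalent to $(t+1)^t \ge 8 \cdot t!$, which is an easy induction starting from $4^3 = 64 \ge 48$); its value at $t = 3$ is $48^{1/3} > 3$, giving $a > 3$.

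The main (mild) obstacle is precisely this numerical verification that the hypothesis $a \ge (k^3\, t!)^{1/t}$ implies $a \ge 2$ for $t \ge 3$; once this is granted, everything else is a clean manipulation of binomial coefficients, and this is in fact the only place where the particular form of the bound on $r$ is used in the $t \ge 3$ case.
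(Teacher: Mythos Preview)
Your proof is correct. Both arguments make the same initial reduction --- noting $(k-2)^3+(k-3)\le k^3$ and then aiming for $\binom{2r-t}{t}-2\binom{r}{t}+1\ge k^3$ --- but they establish the binomial inequality by different means. The paper writes $\binom{2r-t}{t}$ as a product, peels off the two smallest factors, and uses $\prod_{i=0}^{t-3}\bigl((r-i)+(r-t)\bigr)\ge\prod_{i=0}^{t-3}(r-i)+(r-t)^{t-2}$ to obtain the additive bound $\binom{2r-t}{t}\ge 2\binom{r}{t}+\tfrac{2}{t!}(r-t)^t$ in one stroke. You instead split off $t=2$ (where an exact identity gives the value $a^2$) and, for $t\ge 3$, prove the \emph{multiplicative} bound $\binom{2r-t}{t}\ge 3\binom{r}{t}$ by telescoping Pascal's rule, then combine it with $\binom{r}{t}\ge a^t/t!\ge k^3$. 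Your route is arguably a touch more elementary and has the small bonus that it handles $t=2$ cleanly; the paper's product step, read literally at $t=2$, would assert $1\ge 1+1$ for the empty product and so really requires the same one-line separate check that you give.
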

	\begin{proof}
		Notice that
		\begin{align*}
			\binom{2r-t}{t} &= \frac{1}{t!} \cdot \prod_{i=0}^{t-1} (2r-t-i) = \frac{1}{t!} \cdot  (2r-2t+1) (2r-2t+2) \cdot \prod_{i=0}^{t-3} (r-i+r-t)\\
			& \ge \frac{1}{t!} \cdot 2 \cdot (r-t+1)(r-t+2) \cdot \left[ \prod_{i=0}^{t-3} (r-i) + \prod_{i=0}^{t-3} (r-t)\right] \\
			&\ge 2 \binom{r}{t} + \frac{2}{t!}(r-t)^t\, ,
		\end{align*}
		where in the second line we used that $2 \le t \le r-1$. This in turn gives
		\[
		\binom{2r-t}{t}-2\binom{r}{t} 
		\ge \frac{2}{t!}\cdot(r-t)^t 
		\ge k^3
		\ge (k-2)^3+(k-3)\, ,
		\] 
		where the second inequality follows from $r \ge t + (k^3\cdot t!)^{1/t}$.    	
	\end{proof}

    \begin{claim}
    \label{claim:e_in_few_2_config}
        Let $\cF$ be a $k$-free $r$-graph and $e \in \cF$.
        Then the number of $2$-configurations of $\cF$ containing $e$ is at most $k-2$.    
    \end{claim}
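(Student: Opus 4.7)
The plan is to argue by contradiction. Suppose, towards a contradiction, that $e$ is contained in at least $k-1$ distinct $2$-configurations of $\cF$. Each such $2$-configuration pairs $e$ with a distinct edge $f_i \in \cF \setminus \{e\}$, giving $k-1$ pairwise distinct edges $f_1, \ldots, f_{k-1}$, each satisfying $|e \cap f_i| \ge t$ (since $\{e, f_i\}$ spans at most $2(r-t) + t = 2r - t$ vertices, which is equivalent to $|e \cap f_i| \ge t$).

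I would then show that $\{e, f_1, \ldots, f_{k-1}\}$ is itself a $k$-configuration, contradicting the $k$-freeness of $\cF$. Indeed, a simple union bound gives
\[
    |e \cup f_1 \cup \cdots \cup f_{k-1}| \le |e| + \sum_{i=1}^{k-1} |f_i \setminus e| \le r + (k-1)(r - t) = k(r - t) + t,
\]
so these $k$ distinct edges span at most $k(r-t) + t$ vertices, as required. This contradicts the hypothesis that $\cF$ is $k$-free, so $e$ lies in at most $k-2$ many $2$-configurations.

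There is no serious obstacle here: the argument is essentially a vertex-count using that sharing at least $t$ vertices with $e$ is exactly the condition characterising $2$-configurations through $e$, combined with the identity $k(r-t) + t = r + (k-1)(r-t)$.
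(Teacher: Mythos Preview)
Your proposal is correct and essentially identical to the paper's proof: both argue by contradiction, take $k-1$ edges $f_i$ forming $2$-configurations with $e$, and observe that $\{e,f_1,\ldots,f_{k-1}\}$ spans at most $r+(k-1)(r-t)=k(r-t)+t$ vertices, yielding a forbidden $k$-configuration. Your version simply spells out the intersection condition $|e\cap f_i|\ge t$ and the union bound a bit more explicitly.
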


    \begin{proof}
        If there were $k-1$ distinct $2$-configurations $\{e,e_i\}$ for $i \in [k-1]$, then $S:=\{e,e_1,\dots,e_{k-1}\}$ would be a $k$-configuration of $\cF$, being a collection of $k$ edges spanning at most $r+(k-1)(r-t)=k(r-t)+t$ vertices, a contradiction to $\cF$ being $k$-free.
    \end{proof}

    \begin{claim}
    \label{claim:T_in_few_2_config}
        Let $k$ be an even integer, $\cF$ a $k$-free $r$-graph and $T \subseteq V(\cF)$ with $|T| = t$.
        Then the number of $2$-configurations whose vertex set contains $T$ is at most $(k-2)^2$.    
    \end{claim}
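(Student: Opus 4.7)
The plan is to show that every $2$-configuration of $\cF$ whose vertex set contains $T$ must use an edge from a small ``covering'' set.

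The key observation, where the assumption that $k$ is even is crucially used, is the following: any $k/2$ pairwise edge-disjoint $2$-configurations of $\cF$ whose vertex sets all contain $T$ together form a $k$-configuration. Indeed, such a collection consists of $k$ distinct edges by edge-disjointness, and since each $2$-configuration has vertex set of size at most $2r-t$ and contains $T$, it contributes at most $2(r-t)$ vertices outside $T$. Summing, the union of all the vertex sets has size at most $t + (k/2)\cdot 2(r-t) = k(r-t)+t$, so we indeed obtain a $k$-configuration. Since $\cF$ is $k$-free, any family of pairwise edge-disjoint $2$-configurations whose vertex sets all contain $T$ has size at most $k/2 - 1$.

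Now I would take such a family $\cS$ which is maximal by inclusion and let $C$ be the set of edges of $\cF$ appearing in some pair of $\cS$, so that $|\cS| \le k/2 - 1$ and $|C| \le 2(k/2-1) = k-2$. By the maximality of $\cS$, every $2$-configuration whose vertex set contains $T$ must share an edge with some pair in $\cS$, and hence contain at least one edge of $C$. Applying \cref{claim:e_in_few_2_config} (which guarantees that each edge of $\cF$ lies in at most $k-2$ $2$-configurations) to each edge of $C$, the total number of $2$-configurations whose vertex set contains $T$ is at most $|C|\cdot(k-2) \le (k-2)^2$, as required. I do not anticipate any real obstacle here beyond presenting this maximal-family/covering argument cleanly, since both ingredients follow the same $k$-freeness extremal style as \cref{claim:e_in_few_2_config}.
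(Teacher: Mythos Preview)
Your argument is correct and essentially identical to the paper's proof: both take a maximal family of pairwise edge-disjoint $2$-configurations through $T$, bound its size by $k/2-1$ via the same vertex count, and then combine this with \cref{claim:e_in_few_2_config} to get the $(k-2)^2$ bound.
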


    \begin{proof}
    	Let $\cS$ be a maximal collection of pairwise edge-disjoint $2$-configurations of $\cF$ whose vertex set contains $T$.
    	We have $|\cS| \le (k-2)/2$, as otherwise there would exist distinct $2$-configurations $S_1,\dots,S_{k/2}$ in $\cS$ and $S_1 \cup \dots \cup S_{k/2}$ would give a $k$-configuration of $\cF$, being a collection of $k$ edges spanning at most $(2r-t)k/2-(k/2-1)|T|=k(r-t)+t$ vertices.
    	By maximality of $\cS$, any $2$-configuration of $\cF$ whose vertex set contains $T$, must contain an edge which belongs to some $S \in \cS$.
    	There are $2|\cS| \le k-2$ such edges and, for each of them, by \cref{claim:e_in_few_2_config}, the number of $2$-configurations of $\cF$ containing this edge is at most $k-2$.
    	Therefore the number of $2$-configurations of $\cF$ whose vertex set contains $T$ is at most $(k-2)^2$.
    \end{proof}

    Given a hypergraph $\cF$, we say that a $t$-set $T$ of $V(\cF)$ is \emph{covered} exactly $i$ times, if $T$ is contained in exactly $i$ edges of $\cF$.
    We denote by $J_i(\cF)$ the set of $t$-subsets of $V(\cF)$ covered exactly $i$ times, and by $J_{\ge i}(\cF)$ the set of $t$-subsets of $V(\cF)$ covered at least $i$ times.
    \begin{claim}
    \label{claim:J0_J2}
        Let $k$ be a positive even integer and $\cF$ be an $r$-graph which is $k$-free, $2^-$-free, and has no $3^-$-configurations containing a $2$-configuration. Write $J_0 := J_0(\cF)$ and $J_{\ge 2} := J_{\ge 2}(\cF)$.
        Then 
        \[
           (k-2)^2|J_0| \ge \left\{ \binom{2r-t}{t}-\left[2\binom{r}{t}-1\right] - (k-3) \right\} \cdot |J_{\ge 2}| \, .  
        \]
    \end{claim}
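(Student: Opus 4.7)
The plan is to establish the inequality via a double count of pairs $(T, T')$ with $T \in J_{\ge 2}$ and $T' \in J_0$, where $T'$ lies in the vertex set of a 2-configuration associated to $T$. For each $T \in J_{\ge 2}$, fix two edges $e_1^T, e_2^T \in \cF$ that contain $T$. Since $\cF$ is $2^-$-free, any two distinct edges of $\cF$ meet in at most $t$ vertices, so $e_1^T \cap e_2^T = T$ and $|e_1^T \cup e_2^T| = 2r - t$. An inclusion--exclusion count shows that among the $\binom{2r-t}{t}$ $t$-subsets of $e_1^T \cup e_2^T$, exactly $2\binom{r}{t} - 1$ are contained in $e_1^T$ or $e_2^T$, while the remaining $\binom{2r-t}{t} - 2\binom{r}{t} + 1$ \emph{crossing} subsets $T'$ meet both $e_1^T \setminus e_2^T$ and $e_2^T \setminus e_1^T$, and hence are not covered by $e_1^T$ or $e_2^T$.

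The key step is to show that at most $k - 3$ crossing subsets are covered by any edge of $\cF$. Suppose a crossing $T'$ is covered by $e_3 \in \cF \setminus \{e_1^T, e_2^T\}$. If $|e_3 \cap (e_1^T \cup e_2^T)| \ge t + 1$, then $\{e_1^T, e_2^T, e_3\}$ spans at most $(2r-t) + (r-t) - (t+1) = 3(r-t) + t - 1$ vertices, forming a $3^-$-configuration that contains the $2$-configuration $\{e_1^T, e_2^T\}$, contradicting the hypothesis. Hence $e_3 \cap (e_1^T \cup e_2^T) = T'$ exactly, so distinct covered crossing subsets correspond to distinct witness edges. If $k - 2$ such witnesses $e_3^{(1)}, \dots, e_3^{(k-2)}$ existed, the collection $\{e_1^T, e_2^T, e_3^{(1)}, \dots, e_3^{(k-2)}\}$ would span at most $(2r-t) + (k-2)(r-t) = k(r-t) + t$ vertices, forming a $k$-configuration and violating $k$-freeness. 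Therefore at least $\binom{2r-t}{t} - [2\binom{r}{t} - 1] - (k - 3)$ of the crossing subsets lie in $J_0$.

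Summing this lower bound over all $T \in J_{\ge 2}$ yields a lower bound on the number of pairs $(T, T')$ with $T \in J_{\ge 2}$, $T' \in J_0$, and $T' \subseteq e_1^T \cup e_2^T$. Switching the order of summation, I would then bound, for each fixed $T' \in J_0$, the number of contributing $T$: each such $T$ determines a distinct $2$-configuration $\{e_1^T, e_2^T\}$ (since $T = e_1^T \cap e_2^T$ is recoverable from the configuration) whose vertex set contains $T'$, and since $k$ is even, \Cref{claim:T_in_few_2_config} caps this number by $(k-2)^2$. Equating the two sides of the double count gives the desired inequality. The main task is not any hard estimate but careful bookkeeping of which hypothesis is invoked at which step: $2^-$-freeness pins down $|e_1^T \cup e_2^T| = 2r-t$, the absence of $3^-$-configurations containing $2$-configurations forces $e_3 \cap (e_1^T \cup e_2^T) = T'$, and $k$-freeness bounds the number of witnesses.
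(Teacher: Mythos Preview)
Your proposal is correct and follows essentially the same double-counting argument as the paper. The only cosmetic difference is that the paper counts pairs $(S,T')$ over all $2$-configurations $S$ and then observes that there are at least $|J_{\ge 2}|$ such $S$, whereas you index directly by $T\in J_{\ge 2}$ and attach one fixed $2$-configuration $\{e_1^T,e_2^T\}$ to each; since distinct $T$ yield distinct $2$-configurations (as you note, $T$ is recoverable as $e_1^T\cap e_2^T$), the two setups are equivalent.
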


    \begin{proof}
        We use a double counting argument on the set
        \[
        \cQ:=\left\{(S,T):~
        \begin{array}{c}
             \text{$S$ is a $2$-configuration of $\cF$,} \\
             \text{$T \subseteq V(S)$, $|T|=t$ and $T \in J_0$}
        \end{array}
        \right\}
        \, .
        \]
        
        Fix $T \subseteq V(\cF)$ with $|T|=t$.
        By \cref{claim:T_in_few_2_config}, the number of $2$-configurations of $\cF$ whose vertex set contains $T$ is at most $(k-2)^2$.
        We conclude that
        \begin{equation}
        \label{eq:Q_upper_bound}
            |\cQ| \le (k-2)^2|J_0| \, .    
        \end{equation}

        Now fix a $2$-configuration $S:=\{f_1,f_2\}$ and observe that, since $\cF$ is $2^-$-free, $S$ spans precisely $2r-t$ vertices, so $f_1$ and $f_2$ share precisely $t$ vertices.
        We now estimate the number of $t$-sets $T \subseteq V(S)$ with $T \in J_0$.
        Since $T \subseteq V(S)$, either $T$ is fully contained in $f_1$ or in $f_2$, or intersects both $f_1 \setminus f_2$ and $f_2 \setminus f_1$.
        
        If $T$ is fully contained in $f_1$, then it does not belong to $J_0$, as it is covered at least once (by the edge $f_1$).
        Clearly, the same argument applies to any $T$ which is fully contained in $f_2$.
        Moreover, the number of such $t$-sets is $2\binom{r}{t}-1$.
        
        Now we consider those $T$ intersecting both $f_1 \setminus f_2$ and $f_2 \setminus f_1$.
        If $T \not\in J_0$, then there exists $e \in \cF$ with $T \subseteq e$, and clearly $e \neq f_1, f_2$.
		Notice that, $|e \cap V(S)| \le t$, by the assumption that there are no $3^-$-configurations containing a $2$-configuration. As $T \subseteq e \cap V(S)$, we have that $e \cap V(S) = T$.
        It follows that among the $t$-sets of $V(S)$ intersecting both $f_1 \setminus f_2$ and $f_2 \setminus f_1$, all but at most $k-3$ belong to $J_0$. 
        Indeed, otherwise, there would exist pairwise distinct $t$-sets $T_1, \dots, T_{k-2} \subseteq V(S)$ and pairwise distinct edges $e_1, \dots, e_{k-2}$ with $T_i \subseteq e_i$ for each $i \in [k-2]$.
        However, $\{e_1,\dots,e_{k-2},f_1,f_2\}$ would be a $k$-configuration, being a collection of $k$ edges spanning at most $(k-2)(r-t)+2r-t=k(r-t)+t$ vertices.
        
        Therefore, for a given $2$-configuration $S$, the number of $t$-sets $T \subseteq V(S)$ with $T \in J_0$ is at least
        \begin{equation}
        \label{eq:T_subsets}
            \binom{2r-t}{t}-\left[2\binom{r}{t}-1\right] - (k-3)\, ,
        \end{equation}
        where the first term stands for the number of $t$-sets of $V(S)$, while the rest accounts for the arguments above.

		Finally, observe that every $T \in J_{\ge 2}$ gives rise to a $2$-configuration $\{f_1, f_2\}$ with $T = f_1 \cap f_2$ (we have $T \subseteq f_1 \cap f_2$ by definition, with equality because $\cF$ is $2^-$-free), and these $2$-configurations are distinct for different sets $T$. This shows that the number of $2$-configurations of $\cF$ is at least $|J_{\ge 2}|$.
        Using \eqref{eq:T_subsets}, we conclude that 
        \begin{equation}
        \label{eq:Q_lower_bound}
            |\cQ| \ge \left\{ \binom{2r-t}{t}-\left[2\binom{r}{t}-1\right] - (k-3) \right\} \cdot |J_{\ge 2}| \, .        
        \end{equation}
        
        The claim follows from \eqref{eq:Q_upper_bound} and \eqref{eq:Q_lower_bound}.
    \end{proof}

 	We are now ready to prove \cref{thm:r>>t}.
    
    \begin{proof}[Proof of \cref{thm:r>>t}]
        Let $k$ be an even integer, $t \ge 2$ an integer and let $r$ be an integer satisfying $r \ge t + (k^3 \cdot t!)^{1/t}$.
        From \cref{cor:glockJKKLP} we get
        \begin{equation}
        \label{eq:liminf}
           \liminf_{n\to \infty} n^{-t}f^{(r)}(n;k(r-t)+t,k) \ge \frac{1}{t! \, \binom{r}{t}}\, .
        \end{equation}

        Let $\cF$ be a $k$-free $n$-vertex $r$-graph.
        By \cref{lemma:cleaning}, there exists a subhypergraph $\cF'$ of $\cF$ which is $2^-$-free, has no $3^-$-configurations containing a $2$-configuration and satisfies $|\cF'| \ge |\cF| - O(n^{t-1})$.
		Set $J_i:=J_i(\cF')$ and $J_{\ge i}:=J_{\ge i}(\cF')$ and observe that applications of \cref{claim:J0_J2} and \Cref{claim:calc} give 
        \[
            (k-2)^2|J_0| 
			\ge \left\{ \binom{2r-t}{t}-\left[2\binom{r}{t}-1\right] - (k-3) \right\} \cdot |J_{\ge 2}|
			\ge (k-2)^3|J_{\ge 2}|\,.
        \]
    	Therefore, $|J_0| \ge (k-2) \cdot |J_{\ge 2}|$.
        Now consider the following set
        \[
            \cP:=\{(e,T): e \in \cF', T \subseteq e \text{ with } |T|=t\}\, .
        \]
        Then $|\cP|=|\cF'| \cdot \binom{r}{t}$ and
        \begin{align*}
            |\cP|=\sum_{i \ge 1} i \cdot |J_i| & \le |J_1| + (k-1) \cdot |J_{\ge 2}| \\
            & = |J_{\ge 0}| + (k-2) \cdot |J_{\ge 2}|-|J_0| \le \binom{n}{t}\, , 
        \end{align*}
        where in the first inequality we used that a $t$-set covered at least $k$ times gives a $k$-configuration and thus, since $\cF'$ is $k$-free, we have $J_{\ge k} = \emptyset$, while in the last inequality we used $|J_{\ge 0}| = \binom{n}{t}$ and $|J_0| \ge (k-2) \cdot |J_{\ge 2}|$.
        We conclude that 
        \[
            |\cF| 
			\le |\cF'| + O(n^{t-1}) 
			= |\cP| \cdot \binom{r}{t}^{-1} + O(n^{t-1})
			\le \binom{n}{t} \cdot \binom{r}{t}^{-1} + O(n^{t-1})\, , 
        \]
        which allows us to establish that
        \begin{equation}
        \label{eq:limsup}
            \limsup_{n\to \infty} n^{-t}f^{(r)}(n;k(r-t)+t,k) \le \frac{1}{t!\, \binom{r}{t}}\, .           
        \end{equation}
    The theorem follows from \eqref{eq:liminf} and \eqref{eq:limsup}.
    \end{proof}

\section{Proof of \cref{thm:k=5} (\cref{conj:BES} for $k=5$)} \label{sec:k=5}

	In this section we prove \Cref{thm:k=5}, asserting that the limit $\lim_{n \to \infty}n^{-t}f^{(r)}(n; 5(r-t)+5)$ exists, for $2 \le t < r$.
	Our proof uses our density argument (\cref{prop:density}).
	We recall that $J(\cF)$ is the $t$-graph on $V(\cF)$ whose edges are $t$-subsets of $\ell$-configurations in $\cF$ with $\ell \le \floor{k/2}$ (this is defined above \Cref{prop:density}).

    \begin{proof}[Proof of \cref{thm:k=5}]

        Let $\eps > 0$ and $\cF$ be a $5$-free $n$-vertex $r$-graph with $|\cF| \ge \big(\binom{r}{t}^{-1}+\eps\big) \binom{n}{t}$, and suppose that $n$ is large.
        By \cref{lemma:cleaning}, there is a subhypergraph $\cF_1 \subseteq \cF$ which is $2^-$-free, $4^-$-free and $5$-free, satisfies 
		\begin{equation} \label{eqn:F1}
			|\cF_1| \ge |\cF| - O(n^{t-1}) \ge \binom{r}{t}^{-1} \binom{n}{t} \ge \binom{r}{t}^{-1}|J(\cF_1)|,
		\end{equation}
		and where any $2$-configuration and any $3^-$-configuration are edge-disjoint.

        \begin{claim} \label{claim:k=5_claim3}
			Let $\cG \subseteq \cF_1$ and suppose that $S$ is a $3^-$-configuration in $\cG$. Then the following holds with $\cG' := \cG \setminus S$.
			\begin{equation*}
				|J(\cG)| - |J(\cG')| \ge \binom{r}{t}\left(|\cG| - |\cG'|\right)\,.
			\end{equation*}
        \end{claim}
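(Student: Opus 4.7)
The plan is to exhibit $3\binom{r}{t}$ distinct $t$-sets in $J(\cG) \setminus J(\cG')$, one collection of $\binom{r}{t}$ per edge of $S = \{e_1, e_2, e_3\}$, and to verify that the collections coming from different edges of $S$ are pairwise disjoint as sub-families of $\binom{V(\cG)}{t}$. Since $|\cG| - |\cG'| = |S| = 3$, this will give $|J(\cG)| - |J(\cG')| \ge 3\binom{r}{t}$, which is precisely the claim.

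The key preliminary observation is that for every $i \in \{1,2,3\}$ and every $f \in \cF_1 \setminus \{e_i\}$ one has $|e_i \cap f| \le t-1$. Indeed, $|e_i \cap f| \le t$ by the $2^-$-freeness of $\cF_1$, and equality would make $\{e_i, f\}$ a $2$-configuration sharing the edge $e_i$ with the $3^-$-configuration $S$, contradicting the edge-disjointness of $2$-configurations and $3^-$-configurations in $\cF_1$. This has two consequences I shall use: (i) for $i \ne j$ we have $|e_i \cap e_j| \le t-1$, so the $t$-subsets of $e_i$ and $e_j$ are disjoint as families; and (ii) no edge of $\cG'$ contains any $t$-subset $T \subseteq e_i$, because that would force $|e_i \cap f| \ge t$.

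Now fix $i$ and a $t$-subset $T \subseteq e_i$. Clearly $T \in J(\cG)$, since $T$ is contained in the edge $e_i \in \cG$. To show $T \notin J(\cG')$, note that as $\lfloor 5/2 \rfloor = 2$, a $t$-set in $J(\cG')$ must lie either in an edge of $\cG'$ or in the vertex set of a $2$-configuration of $\cG'$; (ii) rules out the former. Suppose for contradiction that $T \subseteq f_1 \cup f_2$ for some $2$-configuration $\{f_1, f_2\}$ in $\cG'$. Then the five edges $e_1, e_2, e_3, f_1, f_2$ are pairwise distinct, and their union spans at most $|V(S)| + |f_1 \cup f_2| - |V(S) \cap (f_1 \cup f_2)| \le (3(r-t)+t-1) + (2r-t) - t = 5(r-t)+t-1$ vertices, using that $S$ is a $3^-$-configuration, that $\{f_1, f_2\}$ is a $2$-configuration, and that $T \subseteq V(S) \cap (f_1 \cup f_2)$ forces this intersection to have size at least $t$. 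This produces a $5$-configuration in $\cF_1$, contradicting the $5$-freeness of $\cF_1$.

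Combining these ingredients, each $e_i$ contributes $\binom{r}{t}$ distinct elements of $J(\cG) \setminus J(\cG')$, and by (i) these contributions are disjoint across $i$, yielding a total of $3\binom{r}{t}$, as required. The delicate point is the $5$-freeness contradiction: the argument relies on the $3^-$-vertex-span bound on $S$ combining with the forced $t$-vertex overlap on $T$ to keep the five-edge union within the $5(r-t)+t$ threshold; the remaining steps are essentially bookkeeping.
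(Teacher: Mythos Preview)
Your proof is correct and follows essentially the same approach as the paper: exhibit the $3\binom{r}{t}$ sets in the $t$-shadow of $S$ and show each lies in $J(\cG)\setminus J(\cG')$. The only minor difference is that for ruling out $T\subseteq f$ with $f\in\cG'$, the paper invokes the $4^-$-freeness of $\cF_1$ (since $S\cup\{f\}$ would be a $4^-$-configuration), whereas you instead use the edge-disjointness of $2$-configurations and $3^-$-configurations; both arguments are valid given the properties of $\cF_1$.
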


        \begin{proof} 
        	Observe that, by definition of $J(S)$, we have that $T \in J(S)$ if and only there exists an edge $e \in S$ with $T \subseteq e$ or there exists a $2$-configuration $S'$ in $S$ whose vertex set contains $T$.
        	Since in $\cG$ any $2$-configuration and any $3^-$-configuration are edge-disjoint, we can rule out the second option.
        	Moreover, a set of size $t$ cannot be in more than one edge of $S$ as, otherwise, $S$ would contain a $2$-configuration, which cannot happen for the same reason.
        	Since $S$ has three edges, we conclude that $|J(S)|=3\binom{r}{t}$.
        	
            Let $T \in J(S)$.
            Then clearly $T \in J(\cG)$, and we aim to show that $T \not\in J(\cG')$.
            For that, note that $T \in J(\cG')$ if and only if there exists an edge $e \in \cG'$ with $T \subseteq e$ or there exists a $2$-configuration $S'$ of $\cG'$ whose vertex set contains $T$.
            The first option cannot happen as, otherwise, $S \cup \{e\}$ would be a $4^-$-configuration of $\cG$, being a collection of four edges spanning at most $(3r-2t-1)+r-t=4(r-t)+t-1$ vertices, a contradiction to $\cF_1$ being $4^-$-free.  
            Similarly, we can rule out the second option as, otherwise, using that $S$ and $S'$ are edge-disjoint, $S \cup S'$ would be a $5^-$-configuration of $\cG$, being a collection of five edges spanning at most $(3r-2t-1)+(2r-t)-t=5(r-t)+t-1$ vertices, a contradiction to $\cF_1$ being $5^-$-free.
            
            Therefore $|J(\cG)| - |J(\cG')| \ge |J(S)| = 3 \binom{r}{t}$ and the claim follows.
		\end{proof}
	
		By applying \cref{claim:k=5_claim3} repeatedly, we can find a subhypergraph $\cF_2 \subseteq \cF_1$, which is $3^-$-free and satisfies
		\begin{equation*}
			|J(\cF_{1})| - |J(\cF_2)| \ge \binom{r}{t} \left(|\cF_{1}| - |\cF_2|\right)\, .
		\end{equation*}
		By \Cref{rem:equivalent-condition} and \eqref{eqn:F1}, it follows that
		\begin{equation*}
			\frac{|\cF_2|}{|J(\cF_2)|} \ge \frac{|\cF_{1}|}{|J(\cF_{1})|}\, .
		\end{equation*}
        Notice that $\cF_2$ is $\ell^-$-free for $\ell \in \{2,3,4\}$ and $5$-free.
        Thus \Cref{prop:density} implies that the limit $\lim_{n \to \infty}n^{-t}f^{(r)}(n;5(r-t)+t,5)$ exists. 
    \end{proof}

\section{Proof of \cref{thm:k=7} (\cref{conj:BES} for $k=7$)} \label{sec:k=7}

This section is concerned with the proof of \Cref{thm:k=7}, which asserts that the limit $\lim_{n \to \infty}n^{-t}f^{(r)}(n;7(r-t)+t,7)$ exists for $r > t \ge 2$ and $(r,t) \neq (3,2)$.
We use our density argument (\cref{prop:density}), together with the following two inequalities.

\begin{claim} \label{claim:calc-2}
	Let $r,t$ be integers such that $3 \le t < r$ or $t = 2$ and $r \ge 4$. Then
	\begin{equation*}
		\binom{3r-2t}{t} - 4 \ge 3 \binom{r}{t}\,.
	\end{equation*}
\end{claim}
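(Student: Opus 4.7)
The plan is to rearrange the inequality as $\binom{3r-2t}{t} - 3\binom{r}{t} \ge 4$ and split into two cases based on $t$. When $t = 2$, the claim reduces to a direct computation: expanding gives $\binom{3r-4}{2} - 3\binom{r}{2} = 3r^2 - 12r + 10 = 3(r-2)^2 - 2$, and for $r \ge 4$ this quantity is at least $10$, hence at least $4$. (The same identity explains why the case $(r,t) = (3,2)$ must be excluded from the hypothesis: the value there is only $1$.)

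For the case $t \ge 3$ with $r \ge t+1$, I would apply Vandermonde's identity to the decomposition $3r - 2t = r + (r-t) + (r-t)$, writing
\[
	\binom{3r-2t}{t} = \sum_{a+b+c = t}\binom{r}{a}\binom{r-t}{b}\binom{r-t}{c}\, .
\]
Three groups of these nonnegative summands already suffice. The $(a,b,c) = (t,0,0)$ term contributes $\binom{r}{t}$; the two terms with $(a,b+c) = (t-1,1)$ together contribute $2(r-t)\binom{r}{t-1}$; and the three terms with $a = t-2$ together contribute $\bigl[2\binom{r-t}{2} + (r-t)^2\bigr]\binom{r}{t-2} = (r-t)(2r-2t-1)\binom{r}{t-2}$. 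Dropping all other (nonnegative) terms and using $\binom{r}{t} = \tfrac{r-t+1}{t}\binom{r}{t-1}$ to combine the first two contributions with $-3\binom{r}{t}$ yields
\[
	\binom{3r-2t}{t} - 3\binom{r}{t} \ge \frac{2\binom{r}{t-1}\bigl[(r-t)(t-1) - 1\bigr]}{t} + (r-t)(2r-2t-1)\binom{r}{t-2}\, .
\]
For $t \ge 2$ and $r \ge t+1$ the first summand is nonnegative, since $(r-t)(t-1) \ge 1$. The second summand is at least $1 \cdot 1 \cdot \binom{t+1}{t-2} = \binom{t+1}{3} \ge 4$ whenever $t \ge 3$ and $r \ge t+1$, which settles this case.

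The main technical point is simply arranging the algebra so that each case closes cleanly. The Vandermonde decomposition naturally separates $t = 2$ (where $\binom{r}{t-2} = \binom{r}{0} = 1$ is too small on its own, forcing the direct calculation) from $t \ge 3$ (where the $a = t-2$ contribution alone already produces $4$). It also pinpoints why $(r,t) = (3,2)$ is genuinely excluded: there one has $(r-t)(t-1) - 1 = 0$ and $(r-t)(2r-2t-1)\binom{r}{0} = 1$, both of which are too small to supply the required slack of $4$.
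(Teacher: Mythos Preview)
Your proof is correct, but it takes a genuinely different route from the paper for the case $t \ge 3$. The paper writes $\binom{3r-2t}{t}$ as a product $\frac{1}{t!}\prod_{i=0}^{t-1}(3r-2t-i)$, separates the three smallest factors $(3r-3t+1)(3r-3t+2)(3r-3t+3)$, and applies the elementary cubic inequality $(3x+1)(3x+2)(3x+3) \ge 4(x+1)(x+2)(x+3)$ (valid for $x \ge 1$, with $x=r-t$) to deduce the clean intermediate bound $\binom{3r-2t}{t} \ge 4\binom{r}{t}$; the claim then follows from $\binom{r}{t} \ge t+1 \ge 4$. Your Vandermonde decomposition on $3r-2t = r + (r-t) + (r-t)$, keeping only the terms with $a \in \{t,t-1,t-2\}$, is more combinatorial and has the pleasant feature that it makes transparent both why $t = 2$ needs a separate treatment and why $(r,t) = (3,2)$ genuinely fails (indeed, for $t=2$ no terms are dropped and your bound becomes the exact identity $3r^2-12r+10$). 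The paper's argument is slightly shorter and yields a stronger intermediate inequality; yours is more self-explanatory about the boundary cases. Both are entirely elementary.
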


\begin{proof}
	The claimed inequality can be checked directly for $t = 2$, so suppose that $t \ge 3$.
	\begin{align*}
		\binom{3r-2t}{t} &= \frac{1}{t!} \cdot \prod_{i=0}^{t-1} (3r-2t-i) \\
		& = \frac{1}{t!} \cdot (3r-3t+1) (3r-3t+2) (3r-3t+3) \cdot \prod_{i=0}^{t-4} (r-i+2r-2t)\\
		& \ge \frac{1}{t!} \cdot 4(r-t+1)(r-t+2)(r-t+3) \cdot \prod_{i = 0}^{t-4}(r-i) \\
		&= 4\binom{r}{t} \ge 3\binom{r}{t} + 4\,.
	\end{align*}
	Here in the first inequality we used the inequality $(3x+1)(3x+2)(3x+3) \ge 4(x+1)(x+2)(x+3)$ for $x \ge 1$, which can be checked directly.
	In the last inequality we used that $\binom{r}{t} \ge \binom{t+1}{t} = t+1 \ge 4$.
\end{proof}

\begin{claim} \label{claim:calc-3}
	Let $r,t$ be integers such that $3 \le t < r$ or $t = 2$ and $r \ge 4$.
	Then 
	\begin{equation*}
		\binom{2r-t}{t} \ge 2\binom{r}{t} + 2.
	\end{equation*}
\end{claim}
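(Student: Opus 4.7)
The plan is to mirror the proof of \cref{claim:calc-2}: expand $\binom{2r-t}{t} = \frac{1}{t!}\prod_{i=0}^{t-1}(2r-t-i)$ as a product of $t$ consecutive integers, rewrite each factor as $(r-i)+(r-t)$, and compare factor-by-factor with $\binom{r}{t}$. The cases split naturally by the size of $r-t$, so I would split on whether $r \ge t+2$ or $r = t+1$ (the latter forcing $t \ge 3$, since the hypothesis excludes $t=2, r=3$).

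First I would handle the generic case $r \ge t+2$. Isolating the two smallest factors and using $2r-t-i \ge r-i$ for $0 \le i \le t-3$ (the remaining product being empty when $t = 2$), I would obtain
\[
\binom{2r-t}{t} \ge \frac{(2r-2t+1)(2r-2t+2)}{(r-t+1)(r-t+2)}\binom{r}{t} = \frac{2(2a+1)}{a+2}\binom{r}{t},
\]
where $a := r-t \ge 2$. The prefactor is at least $5/2$, so the excess $\binom{2r-t}{t} - 2\binom{r}{t}$ is at least $\binom{r}{t}/2$. Under the hypotheses one always has $\binom{r}{t} \ge 6$ (either $t = 2$ and $r \ge 4$, giving $\binom{r}{2} \ge 6$; or $t \ge 3$ and $r \ge t+2$, giving $\binom{r}{t} \ge \binom{t+2}{2} \ge 10$), so the excess comfortably exceeds $2$.

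Next I would treat the remaining case $r = t+1$ (with $t \ge 3$) directly, via
\[
\binom{2r-t}{t} - 2\binom{r}{t} = \binom{t+2}{2} - 2(t+1) = \frac{(t+1)(t-2)}{2} \ge 2,
\]
with equality precisely at $t = 3$.

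No step is expected to be a serious obstacle; the argument is routine factor-by-factor arithmetic once the correct case split is set up. The only subtlety is the tight boundary case $t = 3$, $r = 4$, where the inequality holds with equality. This is precisely why the case $r = t+1$ must be separated from the generic product bound: in the unified product argument the prefactor $2(2a+1)/(a+2)$ drops to exactly $2$ when $a = 1$, losing all slack and failing to beat $2\binom{r}{t}$.
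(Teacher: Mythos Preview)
Your proof is correct. The factor-by-factor bound and the separate treatment of $r=t+1$ are both valid, and you correctly identified why the case split is necessary (the prefactor degenerates to $2$ when $a=1$).

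The paper takes a different, purely combinatorial route: it interprets $\binom{2r-t}{t}$ as the number of $t$-subsets of $A\cup B\cup C$ where $|A|=|B|=r-t$ and $|C|=t$, observes that the $t$-subsets of $A\cup C$ together with those of $B\cup C$ already contribute $2\binom{r}{t}-1$, and then counts the additional $t$-subsets that use exactly one vertex from each of $A$ and $B$ and $t-2$ from $C$, namely $(r-t)^2\binom{t}{t-2}\ge 3$. This handles all cases at once with no split, since the hypothesis ``$t\ge 3$ or $r-t\ge 2$'' is exactly what makes $(r-t)^2\binom{t}{t-2}\ge 3$. Your approach has the merit of being uniform with the proof of \cref{claim:calc-2} and of making the tightness at $(t,r)=(3,4)$ explicit; the paper's approach is shorter and avoids any case analysis.
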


\begin{proof}
	Let $A, B, C$ be pairwise disjoint sets of sizes $r-t, r-t, t$, respectively.
	Then $\binom{2r-t}{t}$ is the number of $t$-subsets of $A \cup B \cup C$.
	This is at least the number of $t$-subsets of either $A \cup C$ or $B \cup C$, of which there are $2\binom{r}{t} - 1$, plus the number of $t$-subsets of $A \cup B \cup C$ consisting of one vertex from each of $A$ and $B$ and $t-2$ vertices from $C$, of which there are $(r-t)^2 \binom{t}{t-2} \ge 3$, using that either $t \ge 3$ or $r-t \ge 2$. Altogether, we have that $\binom{2r-t}{t} \ge 2\binom{r}{t} - 1 + 3 = 2\binom{r}{t}+2$, as required.
\end{proof}

We are now ready to prove \Cref{thm:k=7}.

\begin{proof}[Proof of \cref{thm:k=7}]

    Let $\eps > 0$ and $\cF$ be a $7$-free $n$-vertex $r$-graph with $|\cF| \ge \big( \binom{r}{t}^{-1} + \eps\big) \binom{n}{t}$, and suppose that $n$ is large.

    Apply \cref{lemma:cleaning} to get a subhypergraph $\cF_1 \subseteq \cF$ which is $2^-$-free, $3^-$-free, $6^-$-free and $7$-free, satisfies 
	\begin{equation} \label{eqn:F11}
		|\cF_1| \ge |\cF| - O(n^{t-1})
		\ge \binom{r}{t}^{-1}\binom{n}{t} 
		\ge \binom{r}{t}^{-1} |J(\cF_1)|,
	\end{equation}
	and where any $2$-configuration and any $5^-$-configuration are edge-disjoint, and any $3$-configuration and any $4^-$-configuration are edge-disjoint.
    Now we prove some structural claims on subhypergraphs of $\cF_1$.
    
    \begin{claim} \label{claim:K=7_claim1}
		Let $\cG \subseteq \cF_1$ and suppose that $S$ is a $3$-configuration in $\cG$ contained in a $4$-configuration in $\cG$.
		Then the following holds with $\cG' := \cG \setminus S$.
        \[
			|J(\cG)| - |J(\cG')| \ge \binom{r}{t} \left(|\cG| - |\cG'|\right) \, .
        \]
    \end{claim}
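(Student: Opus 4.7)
The aim is to show $|J(\cG)| - |J(\cG')| \ge \binom{3r-2t}{t} - 4$, which by \Cref{claim:calc-2} yields the required bound $\ge 3\binom{r}{t}$.

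First, pin down exact vertex counts. Since $\cF_1$ is $3^-$-free, $|V(S)| = 3r - 2t$. Let $S^* = S \cup \{f\}$ be the given $4$-configuration containing $S$. Since $3$-configurations and $4^-$-configurations are edge-disjoint in $\cF_1$ and $S \subseteq S^*$, the $4$-configuration $S^*$ is not a $4^-$-configuration, so $|V(S^*)| = 4r - 3t$ and $|X| = t$, where $X := f \cap V(S)$. Because $S$ is a $3$-configuration of $\cG$, every $t$-subset of $V(S)$ lies in $J(\cG)$; hence $|J(\cG)| - |J(\cG')| \ge \binom{3r-2t}{t} - |B|$, where $B := \binom{V(S)}{t} \cap J(\cG')$, reducing the task to showing $|B| \le 4$.

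The main step is a case analysis on the $\ell$-configuration $S' \subseteq \cG'$ witnessing $T \in J(\cG')$, for $\ell \in \{1,2,3\}$. For $\ell = 1$ (so $T \subseteq e' \in \cG'$), the edge-disjointness of $3$-configurations and $4^-$-configurations forces $|V(S) \cap e'| = t$ and $T = V(S) \cap e'$; then combining $S^*$ with three distinct such edges $e' \in \cG' \setminus \{f\}$ would produce a $7$-configuration on $\le 7(r-t)+t$ vertices, contradicting $7$-freeness. So at most two such edges exist in $\cG' \setminus \{f\}$, yielding, together with the possible contribution $T = X$ from $f$ itself, at most $3$ distinct $T$'s. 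For $\ell = 3$, edge-disjoint $S^*$ and $S'$ would form a $7$-configuration, so $f \in S'$; then a $6^-$-freeness analysis of $S^* \cup S'$ forces $V(S) \cap V(S') \subseteq X$, giving $T = X$, already counted. A parallel $5^-$-freeness argument (using edge-disjointness of $2$- and $5^-$-configurations) handles $\ell = 2$ with $f \in S'$ analogously, again forcing $T = X$.

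The remaining case $\ell = 2$ with $f \notin S'$ is the main subtlety. Edge-disjointness of $2$- and $5^-$-configurations applied to $S \cup S'$ forces $V(S) \cap V(S') = T$. Suppose two such $2$-configurations $S_1', S_2'$ yield distinct $T_1, T_2 \in B$. If $S_1'$ and $S_2'$ are edge-disjoint, then $S \cup S_1' \cup S_2'$ is a $7$-configuration, contradicting $7$-freeness. If they share an edge $g$, writing $S_i' = \{g, h_i\}$, then $3^-$-freeness forces $h_1 \cap h_2 \subseteq g$, and a $6^-$-freeness analysis of $S \cup \{g, h_1, h_2\}$ forces $|V(S) \cap g| = t$, which collapses $T_1 = T_2 = V(S) \cap g$, a contradiction. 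An analogous ``triangle'' argument rules out three $2$-configurations pairwise sharing distinct edges. Hence this case contributes at most one new $T$, giving $|B| \le 3 + 1 = 4$. The main obstacle is precisely this final case, requiring careful $6^-$-freeness analyses on $6$-edge subhypergraphs to force shared-edge $2$-configurations in $\cG'$ to produce coincident $T$'s.
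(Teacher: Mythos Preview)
Your approach is essentially the same as the paper's: both arguments show that at most four $t$-subsets of $V(S)$ survive in $J(\cG')$, then invoke \Cref{claim:calc-2}. The paper organises the case analysis slightly differently (it does not split on whether $f$ lies in the witnessing configuration, and simply proves that at most one $t$-subset of $V(S)$ lies in a $2$-configuration of $\cG'$), but the underlying obstructions ($7$-configurations, $6^-$-configurations, and the edge-disjointness of $2$- and $5^-$-configurations) are identical.

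There is, however, one incorrect step in your shared-edge subcase for $\ell=2$ with $f\notin S'$. You write that ``a $6^-$-freeness analysis of $S\cup\{g,h_1,h_2\}$ forces $|V(S)\cap g|=t$, which collapses $T_1=T_2=V(S)\cap g$''. This is not what $6^-$-freeness gives, and in fact $|V(S)\cap g|<t$ here: since you already established $V(S)\cap V(S_i')=T_i$ and $g\in S_i'$, you have $V(S)\cap g\subseteq T_1\cap T_2$, which has size strictly less than $t$ when $T_1\neq T_2$. What $6^-$-freeness of $S\cup\{g,h_1,h_2\}$ actually yields is
\[
\big|V(S)\cap V(\{g,h_1,h_2\})\big|\le t,
\]
and since $T_1,T_2$ are both $t$-subsets of this intersection you get $T_1=T_2$ directly, the desired contradiction. (This is exactly the paper's argument, phrased as: $S_1'\cup S_2'$ is a $3$-configuration meeting $V(S)$ in more than $t$ vertices, so $S\cup S_1'\cup S_2'$ is a $6^-$-configuration.) With this correction the argument goes through; the preliminary observation $h_1\cap h_2\subseteq g$ and the ``triangle'' remark are then unnecessary.
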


    \begin{proof}
		Write $S := \{e_1, e_2, e_3\}$ and let $e_4 \in \cG$ be such that $\{e_1, e_2, e_3, e_4\}$ is a $4$-configuration.
        Let $T':=V(S) \cap  e_4$ and observe that $|T'|=t$.
        Indeed, $|T'| \ge t$ follows from the fact that $S$ is a $3$-configuration but not a $3^-$-configuration, implying that $|V(S)|=3r-2t$, and $S \cup \{e_4\}$ is a $4$-configuration, while $|T'| \le t$ follows from the fact that otherwise $S \cup \{e_4\}$ would be a $4^-$-configuration, a contradiction to any $3$-configuration and any $4^-$-configuration of $\cF_1$ being edge-disjoint.
        
        We now lower bound $|J(\cG)| - |J(\cG')|$.
        Let $T \subseteq V(S)$ satisfy $|T|=t$.
        Since $S$ is a $3$-configuration, we have $T \in J(\cG)$.
		For $T$ to be in $J(\cG')$ there must be an $\ell$-configuration in $\cG'$ with $\ell \in [3]$ whose vertex set contains $T$.
		We now prove the following assertions, to help us bound the number of times each of these options can happen.
        \begin{enumerate}[label=(\roman*)]
            \item \label{K=7_i} excluding $T'$, no $t$-subset of $V(S)$ is contained in the vertex set of a $3$-configuration of $\cG'$;
            \item \label{K=7_ii} at most one $t$-subset of $V(S)$ is contained in the vertex set of a $2$-configuration of $\cG'$; 
            \item \label{K=7_iii} excluding $T'$, at most two $t$-subsets of $V(S)$ are contained in an edge of $\cG'$.
        \end{enumerate}
        We show \ref{K=7_i} as follows.
        Any $3$-configuration $S'$ in $\cG'$ satisfies $|V(S) \cap V(S')| \le t$ as otherwise $S \cup S'$ would be a $6^-$-configuration.
        Therefore if a $t$-subset $T \neq T'$ of $V(S)$ is contained in a $3$-configuration $S'$ in $\cG'$, then $e_4 \not\in S'$ and $S \cup S' \cup \{e_4\}$ is a $7$-configuration of $\cG$, a contradiction.

        For \ref{K=7_ii} we argue as follows.
        First observe that any $2$-configuration $S'$ in $\cG'$ satisfies $|V(S) \cap V(S')| \le t$ as otherwise $S \cup S'$ would be a $5^-$-configuration of $\cG$, which is a contradiction as any $5^-$-configuration and any $2$-configuration of $\cG$ are edge-disjoint.
		Now suppose there are two distinct $t$-subsets $T_1$ and $T_2$ of $V(S)$ and two $2$-configurations $S_1$ and $S_2$ in $\cG'$ with $T_i \subseteq V(S_i)$ for $i \in [2]$.
        Observe that $S_1 \neq S_2$ as any $2$-configuration in $\cG'$ intersects $V(S)$ in no more than $t$ vertices, as argued above.
        If $S_1$ and $S_2$ were edge-disjoint, then $S_1 \cup S_2 \cup S$ would be a $7$-configuration of $\cG$, a contradiction.
        If $S_1$ and $S_2$ were not edge-disjoint, then $S_1 \cup S_2$ would be a $3$-configuration intersecting $V(S)$ in more than $t$ vertices, but then $S_1 \cup S_2 \cup S$ would be a $6^-$-configuration of $\cG$, a contradiction.

        Finally, we prove \ref{K=7_iii}.
        Any edge not in $S$ intersects $V(S)$ in at most $t$ vertices, as $3$-configurations and $4^-$-configurations of $\cG$ are edge-disjoint.
		Suppose there were three distinct $t$-subsets $T_1,T_2$ and $T_3$ of $V(S)$, all distinct from $T'$, and three (distinct) edges $f_1,f_2$ and $f_3$ not in $S$ with $T_i \subseteq f_i$ for $i \in [3]$.
        Then $e_4 \not\in \{f_1,f_2,f_3\}$ and $S \cup \{e_4,f_1,f_2,f_3\}$ would be a $7$-configuration of $\cG$, a contradiction.

        Recall that $|V(S)|=3r-2t$, so there are $\binom{3r-2t}{t}$ subsets of $V(S)$ of size $t$.
		Taking \ref{K=7_i}, \ref{K=7_ii} and \ref{K=7_iii} into account, and using \Cref{claim:calc-2}, we get
		\[
			|J(\cG)| - |J(\cG')| \ge \binom{3r-2t}{t} - 1 - 3 \ge 3 \binom{r}{t} = \binom{r}{t}(|\cG|-|\cG'|)\, ,
		\]
		as claimed.
    \end{proof}

    By repeatedly applying \cref{claim:K=7_claim1}, we get a subhypergraph $\cF_2 \subseteq \cF_1$ satisfying 
	\begin{equation} \label{eqn:F2}
		|J(\cF_1)| - |J(\cF_2)| \ge \binom{r}{t}\left(|\cF_1| - |\cF_2|\right),
	\end{equation}
	which has no $3$-configuration contained in a $4$-configuration.
    
    \begin{claim} \label{claim:K=7_claim2}
		Let $\cG \subseteq \cF_2$.
		Suppose that $S$ is a $4^-$-configuration in $\cG$. Then there exists a non-empty subset $S' \subseteq S$, such that the following holds with $\cG' := \cG \setminus S'$.
		\begin{equation} \label{eqn:JG}
			|J(\cG)| - |J(\cG')| \ge \binom{r}{t} \left(|\cG| - |\cG'|\right) \, .
		\end{equation}
    \end{claim}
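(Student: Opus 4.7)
The plan is a case analysis on the $2$-configuration graph $H$ on $\{e_1,\ldots,e_4\}$, whose edges are pairs $\{e_i,e_j\}$ with $|e_i\cap e_j|=t$. Since \Cref{claim:K=7_claim1} guarantees $\cF_2$ has no $3$-configuration inside a $4$-configuration, no three edges of $S$ form a $3$-configuration, giving the key inequality
\[
|e_i\cap e_j|+|e_k\cap(e_i\cup e_j)|\le 2t-1
\]
for distinct $i,j,k$. This forces $H$ to be a matching: if $\{e_i,e_j\},\{e_i,e_k\}\in H$ shared vertex $e_i$, then $|e_i\cap e_j|=t$ and $|e_k\cap(e_i\cup e_j)|\ge|e_k\cap e_i|=t$, violating the inequality. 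So $|H|\in\{0,1,2\}$. Three further structural facts will be used throughout: no edge of $S$ lies in any $3$-configuration of $\cG$ (by edge-disjointness of $3$-configurations and $4^-$-configurations); no edge of $S$ is in a $2$-configuration with an edge $f\in\cG\setminus S$ (else $S\cup\{f\}$ is a $5^-$-configuration violating the edge-disjointness of $2$-configurations and $5^-$-configurations); and any $\ell$-configuration $S''\subseteq\cG\setminus S$ with $\ell\in\{2,3\}$ and $|V(S'')\cap V(S)|\ge t$ yields a $(4+\ell)^-$-configuration $S\cup S''$, contradicting $6^-$- or $7$-freeness.

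If $|H|=0$, take $S'=S$: the sets $\binom{e_i}{t}$ are pairwise disjoint, and the three facts above rule out every alternative covering, so all $4\binom{r}{t}$ subsets are lost. If $|H|=1$, say $\{e_1,e_2\}\in H$, take $S'=\{e_1\}$: one loses the $\binom{r}{t}-1$ subsets of $e_1$ other than $e_1\cap e_2$ (the only one still covered, via $e_2$) together with the $\binom{2r-t}{t}-2\binom{r}{t}+1$ crossing subsets of $V(\{e_1,e_2\})$. A crossing $T\subseteq V(\{e_1,e_2\})$ covered by some edge $g\in\cG\setminus\{e_1\}$ would force $\{e_1,e_2,g\}$ to be a $3$-configuration (contradicting that $e_1,e_2\in S$), and coverings via $2$- or $3$-configurations are ruled out analogously. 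The total is $\binom{2r-t}{t}-\binom{r}{t}\ge\binom{r}{t}$ by \Cref{claim:calc-3}. If $|H|=2$, a perfect matching $\{e_1,e_2\},\{e_3,e_4\}$, take $S'=S$; the lost set contains $\binom{V(\{e_1,e_2\})}{t}\cup\binom{V(\{e_3,e_4\})}{t}$, of size $2\binom{2r-t}{t}-\binom{m}{t}$ where $m=|V(\{e_1,e_2\})\cap V(\{e_3,e_4\})|$.

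The main obstacle is the last case: to obtain $\Delta(S)\ge 4\binom{r}{t}$ one must show $\binom{m}{t}\le 2(\binom{2r-t}{t}-2\binom{r}{t})$. The triple inequality forces $|e_i\cap(e_j\cup e_k)|\le t-1$ whenever $\{e_j,e_k\}\in H$ and $i\notin\{j,k\}$, so $m\le 2(t-1)$; moreover, decomposing $e_3\cap(e_1\cup e_2)=((e_3\setminus e_4)\cap(e_1\cup e_2))\sqcup(e_3\cap e_4\cap(e_1\cup e_2))$ and using $|e_3\setminus e_4|=r-t$ yields the sharper bound $m\le\min(2t-2,r-1)$, so the case $|H|=2$ is vacuous when $r=t+1$. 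A refinement of \Cref{claim:calc-3}, via the decomposition $[2r-t]=A\sqcup B\sqcup C$ with $|A|=|B|=r-t,\,|C|=t$ and counting $t$-subsets containing at least one element from each of $A$ and $B$, gives $\binom{2r-t}{t}-2\binom{r}{t}\ge(r-t)^2\binom{t}{2}-1$, which dominates $\binom{m}{t}/2$ throughout the admissible range and completes the proof.
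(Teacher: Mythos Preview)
Your three-case split (via the ``matching graph'' $H$) matches the paper's, and your choices of $S'$ in the cases $|H|=0$ and $|H|=1$ differ from the paper's (you remove all of $S$, resp.\ one edge of the $2$-configuration, whereas the paper removes one edge, resp.\ the whole $2$-configuration) but are equally valid; the three structural facts you isolate are correct and do the required work there.

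The gap is in the case $|H|=2$. Both you and the paper take $S'=S$ and consider the same set $\cJ=\binom{V(S_1)}{t}\cup\binom{V(S_2)}{t}$, but your lower bound on $|\cJ|$ via inclusion--exclusion and the estimate $\binom{2r-t}{t}-2\binom{r}{t}\ge(r-t)^2\binom{t}{2}-1$ is too weak: the lower bound $(r-t)^2\binom{t}{2}-1$ does \emph{not} dominate $\binom{m}{t}/2$ throughout the admissible range. For example at $t=10$, $r=19$ one has $m\le\min(2t-2,r-1)=18$, and $m=18$ is attainable (take $e_1\cap e_2$ and $e_3\cap e_4$ of size $10$ and disjoint from one another and from $(e_1\triangle e_2)=(e_3\triangle e_4)$, an $18$-set; all six triples then satisfy your key inequality with equality). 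Here $(r-t)^2\binom{t}{2}-1=81\cdot 45-1=3644$, whereas $\tfrac12\binom{18}{10}=21879$, so the asserted domination fails. More generally, $\binom{2t-2}{t}$ grows like $4^{t}$ while your lower bound is only quadratic in $t$ for $r-t$ fixed.

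The paper avoids this numerical issue entirely with one extra observation: since $S$ contains no $3$-configuration, no set in the $t$-shadow $\partial_t S_1$ can lie inside $V(S_2)$ (otherwise some $e_i\in S_1$ would satisfy $|e_i\cap(e_3\cup e_4)|\ge t$, making $\{e_i,e_3,e_4\}$ a $3$-configuration). Hence $\partial_t S_1$ and $\binom{V(S_2)}{t}$ are disjoint subsets of $\cJ$, giving
\[
|\cJ|\ \ge\ |\partial_t S_1|+\binom{2r-t}{t}\ =\ \Bigl(2\binom{r}{t}-1\Bigr)+\binom{2r-t}{t}\ \ge\ 4\binom{r}{t},
\]
using only \Cref{claim:calc-3}. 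Plugging this disjointness observation into your argument (in place of the $m$-bound) repairs the gap instantly.
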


    \begin{proof}
        We start by observing that, for $e,e' \in \cG$, if $e \in S$ and $\{e,e'\}$ is a $2$-configuration, then $e' \in S$. Indeed, otherwise, the $5^-$-configuration $S \cup \{e'\}$ and the $2$-configuration $\{e,e'\}$ would not be edge-disjoint, a contradiction. Therefore, either $S$ contains a $2$-configuration or the edges of $S$ are not involved in any $2$-configuration of $\cG$.
		Since $S$ contains no $3$-configurations (by every $4^-$-configuration being edge-disjoint of all $3$-configurations in $\cG$), we have the following three cases: $S$ contains no $2$-configurations; $S$ contains a single $2$-configuration; and $S$ can be partitioned into two $2$-configurations.
		We consider each case separately.

	\paragraph{Case 1. $S$ contains no $2$-configurations.}

		Let $e$ be an edge of $S$, and set $\cS':=\{e\}$ and $\cG':=\cG \setminus \{e\}$.
		We claim that $J(\cG) \setminus J(\cG')$ contains all $t$-subsets of $V(S')$, which would prove \eqref{eqn:JG} in this case.
        Since any such $t$-subset belongs to $J(\cG)$, this follows once we show that
        \begin{enumerate}[label=(\roman*)]
            \item \label{K=7_second_claim_i'} no $t$-subset of $e$ is contained in the vertex set of a $3$-configuration of $\cG'$;
            \item \label{K=7_second_claim_ii'} no $t$-subset of $e$ is contained in the vertex set of a $2$-configuration of $\cG'$;
            \item \label{K=7_second_claim_iii'} no $t$-subset of $e$ is contained in an edge of $\cG'$.
        \end{enumerate}
        Fact \ref{K=7_second_claim_i'} holds since, by assumption, any $3$-configuration is edge-disjoint of $S$ and thus, if it shares $t$ vertices with $e$, its union with $S$ would give a $7^-$-configuration, a contradiction.
        For \ref{K=7_second_claim_ii'}, recall that if there was a $t$-subset of $e$ contained in a $2$-configuration $S''$ then $e \not\in S''$ and $e$ would belong to both the $3$-configuration $S'' \cup \{e\}$ and the $4^-$-configuration $S$, a contradiction. 
        Finally, \ref{K=7_second_claim_iii} holds as otherwise $e$ would belong to a $2$-configuration of $\cG$, a contradiction to the assumption that $S$ contains no $2$-configurations and its edges are thus not involved in $2$-configurations in $\cG$.

	\paragraph{Case 2. $S$ contains a single $2$-configuration $S'$.}
        
        Set $\cG':=\cG \setminus S'$.
        Let $T \subseteq V(S')$ satisfy $|T|=t$.
        Since $S'$ is a $2$-configuration, we have $T \in J(\cG)$.
		For $T$ to be in $J(\cG')$, there must be an $\ell$-configuration in $\cG'$ with $\ell \in [3]$ whose vertex set contains $T$.
		We prove the following assertions, to help us bound the number of times this can happen.
        \begin{enumerate}[label=(\roman*)]
            \item \label{K=7_second_claim_i} no $t$-subset of $V(S')$ is contained in the vertex set of a $3$-configuration of $\cG'$;
            \item \label{K=7_second_claim_ii} no $t$-subset of $V(S')$ is contained in the vertex set of a $2$-configuration of $\cG'$;
            \item \label{K=7_second_claim_iii} no $t$-subset of $V(S')$ is contained in an edge of $\cG'$.
        \end{enumerate}
        Indeed, \ref{K=7_second_claim_i} can be proved as in the previous case. 
        For \ref{K=7_second_claim_ii}, if $S''$ is a $2$-configuration of $\cG'$ which intersects $V(S')$ in (at least) $t$ vertices then, by the assumption on $S$, the configurations $S$ and $S''$ are edge-disjoint, but then $S \cup S''$ is a $6^-$-configuration in $\cG$, a contradiction.
		Finally, \ref{K=7_second_claim_iii} holds since $e \in \cG'$ intersects $V(S')$ in at most $t-1$ vertices, as otherwise $S$ and $S' \cup \{e\}$ are $4^-$- and $3$-configurations that are not edge-disjoint.

		By \ref{K=7_second_claim_i}, \ref{K=7_second_claim_ii}, \ref{K=7_second_claim_iii} and \Cref{claim:calc-3}, we have
		\begin{equation*}
			|J(\cG)| - |J(\cG')| 
			\ge \binom{2r-t}{t} 
			\ge 2\binom{r}{t} 
			= \binom{r}{t} \cdot \left(|\cG| - |\cG'|\right).
		\end{equation*}

	\paragraph{Case 3. $S$ can be partitioned into two $2$-configurations $S_1$, $S_2$.}

		Set $S':=S$ and $\cG' := \cG \setminus S'$.
		Let $\cJ$ be the collection of $t$-sets which are subsets of either $V(S_1)$ or $V(S_2)$.
		Note that if $T$ is in the $t$-shadow of $S_1$ then $T$ is not a subset of $V(S_2)$ (otherwise, $S$ would contain a $3$-configuration). 
        Thus,
		\begin{equation} \label{eqn:J}
			|\cJ| \ge 
			|\partial_t S_1| + \binom{|V(S_2)|}{t}
			= 2\binom{r}{t} - 1 + \binom{2r-t}{t}
			\ge 4\binom{r}{t}\, ,
		\end{equation}
		using \Cref{claim:calc-3}.
		Notice that $\cJ \subseteq J(\cG)$, as its elements are $t$-subsets of vertex sets of $2$-configurations.
		As usual, we claim that
        \begin{enumerate}[label=(\roman*)]
            \item \label{K=7_second_claim_i''} no $t$-set in $\cJ$ is contained in the vertex set of a $3$-configuration of $\cG'$;
            \item \label{K=7_second_claim_ii''} no $t$-set in $\cJ$ is contained in the vertex set of a $2$-configuration of $\cG'$;
            \item \label{K=7_second_claim_iii''} no $t$-set in $\cJ$ is contained in an edge of $\cG'$.
        \end{enumerate}

		Assertion \ref{K=7_second_claim_i''} can be proved as in the first case.
		For \ref{K=7_second_claim_ii''}, if $S''$ is a $2$-configuration in $\cG'$ whose vertex set contains a $t$-set in $\cJ$, then $S \cup S''$ is a $6^-$-configuration in $\cG$, a contradiction.
		Finally, for \ref{K=7_second_claim_iii''}, if $e$ is an edge containing a $t$-set in $\cJ$ then $S \cup \{e\}$ contains a $3$-configuration, a contradiction to the disjointness of $4^-$- and $3$-configurations.

		It follows from \ref{K=7_second_claim_i''}, \ref{K=7_second_claim_ii''}, \ref{K=7_second_claim_iii''} and \eqref{eqn:J} that
		\begin{equation*}
			|J(\cG)| - |J(\cG')|
			\ge |\cJ|
			\ge 4\binom{r}{t}
			= \binom{r}{t}\left(|\cG| - |\cG'|\right)\,. \qedhere
		\end{equation*}
    \end{proof}
    
    By repeatedly applying \cref{claim:K=7_claim2}, we get a subhypergraph $\cF_3 \subseteq \cF_2$ which is $4^-$-free and satisfies 
	\begin{equation} \label{eqn:F3}
		|J(\cF_2)| - |J(\cF_3)| \ge \binom{r}{t}\left(|\cF_2| - |\cF_3|\right).
	\end{equation}

    \begin{claim} \label{claim:K=7_claim3}
		Let $\cG \subseteq \cF_3$ and suppose there exists a $5^-$-configuration $S$ of $\cG$.
        Then the following holds with $\cG':=\cG \setminus S$.
        \[
        |J(\cG)| - |J(\cG')| \ge \binom{r}{t} \left(|\cG| - |\cG'|\right) \, .
        \]        
    \end{claim}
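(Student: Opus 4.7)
The plan is to exhibit $5\binom{r}{t}$ $t$-subsets of $V(S)$ that lie in $J(\cG) \setminus J(\cG')$ --- concretely, all $t$-subsets contained in some edge of $S$. Since $|S|=5$, this immediately gives $|J(\cG)| - |J(\cG')| \ge 5\binom{r}{t} = \binom{r}{t}(|\cG|-|\cG'|)$, as required.

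The first step is to verify that these $5\binom{r}{t}$ subsets are pairwise distinct. This amounts to showing that no two edges of $S$ meet in at least $t$ vertices: otherwise $S$ would contain a $2$-configuration sharing both of its edges with the $5^-$-configuration $S$, contradicting the edge-disjointness of $2$-configurations and $5^-$-configurations in $\cF_1$, which $\cG$ inherits.

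The main step is to show, for each $T \subseteq e \in S$, that no $\ell$-configuration in $\cG'$ with $\ell \in \{1,2,3\}$ has vertex set containing $T$. For $\ell=1$: an edge $f \in \cG'$ containing $T$ would make $\{e,f\}$ a $2$-configuration sharing the edge $e$ with $S$, a contradiction. For $\ell=2$: a $2$-configuration $S'' \subseteq \cG'$ with $T \subseteq V(S'')$ would make $S \cup S''$ a $7^-$-configuration, since its $7$ edges span at most $(5(r-t)+t-1)+(2(r-t)+t)-t = 7(r-t)+t-1$ vertices, contradicting the $7$-freeness of $\cG$. For $\ell=3$: a $3$-configuration $S'' \subseteq \cG'$ with $T \subseteq V(S'')$ would make $\{e\} \cup S''$ a $4$-configuration (its $4$ edges span at most $r+(3(r-t)+t)-t = 4(r-t)+t$ vertices) properly containing the $3$-configuration $S''$, contradicting the property of $\cF_2$ (inherited by $\cG$) that no $3$-configuration lies inside a $4$-configuration.

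The only case requiring nontrivial structural input is $\ell=3$: \emph{a priori}, $3$-configurations need not be edge-disjoint from $5^-$-configurations in $\cG$, so one might worry that some $3$-configuration $S'' \subseteq \cG'$ meets an edge of $S$ in $\ge t$ vertices. The $\cF_2$-cleanup step, which removes $3$-configurations sitting inside $4$-configurations, is precisely what rules this out. Pleasantly, no further case analysis on the internal structure of $S$ is needed, and in particular one does not have to argue separately about whether $S$ contains a $3$-configuration.
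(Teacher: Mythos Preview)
Your proof is correct and follows essentially the same approach as the paper: take $\cJ$ to be the $t$-shadow of $S$, show $|\cJ| = 5\binom{r}{t}$ via the edge-disjointness of $2$-configurations and $5^-$-configurations, and rule out $\ell$-configurations in $\cG'$ for $\ell \in \{1,2,3\}$ using, respectively, that same edge-disjointness, $7$-freeness, and the $\cF_2$-property that no $3$-configuration sits inside a $4$-configuration. Your justifications match the paper's in each case.
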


    \begin{proof}
		Let $\cJ$ be the $t$-shadow of $S$.
	    Then $\cJ \subseteq J(\cG)$ and $|\cJ| = 5\binom{r}{t}$, as a set of size $t$ cannot be in more than one edge of $S$ (otherwise the $5^-$-configuration $S$ would contain a $2$-configuration, a contradiction).
	    
		Next we show that if $T \in \cJ$, then $T \not\in J(\cG')$; for that it is enough to prove that $T$ is not contained in any $\ell$-configuration of $\cG'$ with $\ell \in [3]$.
	    For $\ell=1$, this follows from the fact that any $2$-configuration and any $5^-$-configuration of $\cG$ are edge-disjoint.
	    Similarly, for $\ell=2$ this holds since any $2$-configuration is edge-disjoint of $S$ and thus, if it shares $t$ vertices with $S$, its union with $S$ would give a $7^-$-configuration, a contradiction.
	    Finally, for $\ell=3$, we use that, from \cref{claim:K=7_claim1}, a $3$-configuration cannot be contained in any $4$-configuration of $\cG$.
	    Therefore, we get
        \[
            |J(\cG)| - |J(\cG')| \ge |\cJ| = 5 \binom{r}{t} = \binom{r}{t} \left(|\cG| - |\cG'|\right) \, . \qedhere
        \]   
    \end{proof}

    By repeatedly applying \cref{claim:K=7_claim2}, we get a subhypergraph $\cF_4 \subseteq \cF_3$ which is $5^-$-free and satisfies 
	\begin{equation} \label{eqn:F4}
		|J(\cF_3)| - |J(\cF_4)| \ge \binom{r}{t}\left(|\cF_3| - |\cF_4|\right).
	\end{equation}

	By summing up \eqref{eqn:F2}, \eqref{eqn:F3} and \eqref{eqn:F4}, we get
	\begin{equation*}
		|J(\cF_1)| - |J(\cF_4)| \ge \binom{r}{t}\left(|\cF_1| - |\cF_4|\right).
	\end{equation*}
	Thus, using \Cref{rem:equivalent-condition} and \eqref{eqn:F11},
	\begin{equation*}
		\frac{|\cF_4|}{|J(\cF_4)|}
		\ge \frac{|\cF_1|}{|J(\cF_1)|}\,.
	\end{equation*}
	Notice that $\cF_4$ is $\ell^-$-free for $\ell \in \{2,3,4,5,6\}$ and $7$-free.
	Thus, by \Cref{prop:density}, the limit $\lim_{n \to \infty}n^{-t}f^{(r)}(n;7(r-t)+t,7)$ exists.
\end{proof}

\section{Conclusion}
\label{sec:conclusion}
	Recall that we defined $\pi(r,t,k) := \lim_{n \to \infty}n^{-t}f^{(r)}(n;k(r-t)+t,k)$ (if the limit exists).
	\cref{thm:r>>t} establishes that $\pi(r,t,k)=\frac{1}{t!}\binom{r}{t}^{-1}$ when $k$ is even and $r \ge t + (k^3 \cdot t!)^{1/t}$.
	It would be interesting to determine, for fixed even $k$, what is the smallest $r$ such that $\pi(r,t,k)=\frac{1}{t!}\binom{r}{t}^{-1}$ for all $2 \le t \le r-1$. 
    We remark that the smallest such $r$ is $2$ for $k=2$ and $4$ for $k=4$, as proved in \cite{rodl} and \cite{glockJKKLP_BES_problem}, respectively.

    For general odd $k$ we were not able to prove that the limit $\pi(r,t,k)$ exists, even when $r$ is large. Nevertheless, arguments reminiscent of \Cref{thm:r>>t} yield that if $k$ is odd, $t \ge 2$ and $r$ is sufficiently large with respect to $k$ and $t$, then $\limsup_{n \to \infty}n^{-t}f^{(r)}(n; k(r-t)+t, k) \le \frac{1}{t!} \cdot \frac{2}{2\binom{r}{t}-1}$.
	We briefly sketch the proof idea.

	A \emph{$t$-tight component} is a collection of edges that can be ordered as $\{e_1, \ldots, e_m\}$ so that $e_{i+1}$ shares at least $t$ vertices with one of $e_1, \ldots, e_i$, for each $i \in [m-1]$.
	Given any $k$-free $n$-vertex $r$-graph $\cF$, we let, for $i \in [2]$, $\cF_i$ be the set of the edges of $\cF$ which belong to components of size $i$, and $\cF_3:=\cF \setminus (\cF_1 \cup \cF_2)$.	
	For $i \in [2]$, define $\cG_i$ to be the $t$-shadow of $\cF_i$, and define $\cG_3$ to be the collection of $t$-sets $T$ such that $T \not\in \cG_1 \cup \cG_2$ and there is a unique component in $\cF_3$ that contains a $2$-configuration whose vertex set contains $T$.
    With $\alpha := \frac{1}{2} \cdot \left(2\binom{r}{t}-1\right)$, it is not hard to see that $|\cG_i| \ge \alpha |\cF_i|$ for $i \in [3]$.
    Therefore, since the sets $\cF_1, \cF_2, \cF_3$ partition $\cF$, it follows that $|\cF| = |\cF_1|+|\cF_2|+|\cF_3| \le \frac{1}{\alpha}\left(|\cG_1| + |\cG_2| + |\cG_3|\right) \le \frac{1}{\alpha} \binom{n}{t}$, which implies the desired result.
    
    We suspect this upper bound might be optimal, as this is the case for $k = 3$ (see \cite{glockJKKLP_BES_problem}).

	\begin{remark}
	    The recent work \cite{glockKLPS_recent} mentioned in \cref{remark:recent_paper} shows that, for $k=6$, the smallest $r$ such that $\pi(r,t,6)=\frac{1}{t!}\binom{r}{t}^{-1}$ for all $2 \le t \le r-1$ is $r=4$. 
        Moreover, it shows that the upper bound discussed above is optimal for $k \in \{5,7\}$.
	\end{remark}

\bibliographystyle{amsplain}
\bibliography{references}
      
\end{document}